\newcommand{\indentalign}{\hspace{0.3in}&\hspace{-0.3in}}
\newcommand{\la}{\langle}
\newcommand{\ra}{\rangle}
\newcommand{\defeq}{\stackrel{\rm{def}}{=}}
\newcommand{\supp}{\operatorname{supp}}
\newcommand{\sgn}{\operatorname{sgn}}
\newcommand{\pv}{\operatorname{pv}}
\newtheorem{theorem}{Theorem}
\newtheorem{proposition}[theorem]{Proposition}
\newtheorem{lemma}[theorem]{Lemma}
\theoremstyle{remark}
\numberwithin{equation}{section}
\numberwithin{theorem}{section}
\numberwithin{table}{section}
\numberwithin{figure}{section}
\title[Benjamin-Ono soliton dynamics in a slowly varying potential]{Benjamin-Ono soliton dynamics in a \\ slowly varying potential}
\author{Katherine Zhiyuan Zhang}
\address{Brown University}
\begin{document}

\maketitle

\begin{abstract}
The Benjamin Ono equation with a slowly varying potential is
$$
\text{(pBO)} \qquad u_t + (Hu_x-Vu + \tfrac12 u^2)_x=0
$$
with $V(x)=W(hx)$, $0< h \ll 1$, and $W\in C_c^\infty(\mathbb{R})$, and $H$ denotes the Hilbert transform.  The soliton profile is $Q_{a,c}(x) = cQ(c(x-a))$, where $Q(x) = \frac{4}{1+x^2}$ and $a\in \mathbb{R}$, $c>0$ are parameters.   For initial condition $u_0(x)$ to (pBO) close in $H_x^{1/2}$ to $Q_{0,1}(x)$, we show that the solution $u(x,t)$ to (pBO) remains close in $H_x^{1/2}$ to $Q_{a(t),c(t)}(x)$ and specify the $(a,c)$ parameter dynamics on an $O(h^{-1})$ time scale. 
\end{abstract}

\section{Introduction}
\label{S:intro}

Let $H$ be the Hilbert transform, corresponding to the Fourier multiplier $i\sgn \xi$.   (For further elaboration on notational conventions, see \S \ref{A:notation}.) 
We consider the Benjamin-Ono equation (BO)
$$
\text{(BO)} \qquad 
\partial_t u = \partial_x (-H\partial_x u - \frac12 u^2)
$$
with $u$ real-valued, on $\mathbb{R}$.  The equation (BO) is a model for 1D long internal waves in stratefied fluid, introduced by Benjamin \cite{Ben} and Ono \cite{Ono}.    (BO) has much in common with the Korteweg-de Vries equation (KdV)
$$\text{(KdV)} \qquad \partial_t u = \partial_x (-\partial_x^2 u - 3 u^2)$$
such as physical origin (KdV is a model of waves on shallow water surfaces) and the mathematical structure of complete integrability.  Notably for our purposes as we discuss below, (KdV) has the same symplectic structure as (BO), when regarded as a Hamiltonian system, and both (KdV) and (BO) possess single solitary waves that propagate to the right.  A key difference is that (BO) involves the Hilbert transform, which is a \emph{nonlocal} operator, and this leads to solitary waves that have only algebraic decay for (BO), as opposed to exponential decay for (KdV).

By working with the three transformations $u(x,-t)$, $u(-x,t)$, and $-u(x,t)$ we are in fact covering all four sign choices in $\partial_t u = \partial_x (\pm H\partial_x u \pm \frac12 u^2)$.   Hence we do not have a distinction between ``focusing" or ``defocusing" problems.   Moreover, (BO) also satisfies translational invariance in space and has the scaling invariance, for $\lambda>0$,
$$u \text{ solves (BO)} \implies u_\lambda(x,t) = \lambda u(\lambda x, \lambda^2 t) \text{ solves (BO)}$$
(BO) is completely integrable, so it enjoys infinitely many conserved quantities \cite{BK}, the first three of which are
$$
M_0(u) = \frac12 \int u^2\,, \quad E_0(u) = -\frac12\int uHu_x - \frac{1}{6} \int u^3, $$
$$E_1(u) = \frac12\int u^2_x +\frac{3}{8}\int u^2 Hu_x -\frac{1}{16} \int u^4 $$

Tao \cite{Tao} proved local well-posedness of (BO) in $H_x^1$, and global well-posedness follows using the aforementioned conserved quantities.  This result followed several earlier results at higher regularity, including \cite{Saut, Iorio, GV, Ponce, KT, KK}.  The innovation Tao introduced was a gauge transformation to reduce the effective regularity of the nonlinearity. Following \cite{Tao}, there were a few improvements to even lower regularity, using the gauge transformation idea combined with bilinear Strichartz estimates, culminating in the $L^2$ result by Ionescu \& Kenig \cite{IK} and Molinet \& Pilod \cite{MolPil}.

By \emph{soliton} we mean a coherent traveling wave solution.  Amick \& Toland \cite{AT}, Frank \& Lenzmann \cite{FrLen} showed that there is a unique (up to translations) nontrivial $L^\infty$ solution to 
$$Q -HQ' - \frac12 Q^2 =0$$
given by
$$Q(x) = \frac{4}{1+x^2}$$
For any $c>0$, $a\in \mathbb{R}$, taking $Q_{a,c}(x) = cQ(c(x-a))$ we have
\begin{equation}
\label{E:scaled-sol}
cQ_{a,c} - HQ_{a,c}' - \frac12 Q^2_{a,c} =0
\end{equation}
Then
$$u(x,t) = Q_{ct,c}(x) = cQ(c(x-ct))$$
solves (BO) and we call it the \emph{single soliton} solution to distinguish it from the exact multi-soliton solutions \cite{Case} arising from the completely integrable structure.   The (BO) soliton is only decaying at infinity at power rate unlike for (KdV) where the soliton enjoys exponential decay.
 
Having summarized the basic properties of (BO), we now consider the following Hamiltonian perturbation of (BO)
$$
\text{(pBO)} \qquad
\partial_t u =  \partial_x (- H\partial_x u + V u - \frac12 u^2) 
$$
with \emph{slowly varying} potential
\begin{equation}
\label{E:potential}
V(x) = W(hx) \,, \qquad W\in C_c^\infty(\mathbb{R}) \text{ and } 0<h \ll 1
\end{equation}
The well-posedness of (pBO) in $H^1$ can be proved by adapting the gauge-transform method of Tao \cite{Tao}.   The Hamiltonian has been perturbed to
$$E(u) = E_0(u) + \frac12 \int Vu^2$$
(pBO) is of the form $\partial_t u = JE'(u)$, where $J = \partial_x$.  The symplectic form is given by $\omega(v_1,v_2) = \la v_1, J^{-1}v_2 \ra$, which is only densely defined.\footnote{Any integrable function $v_2$ for which $\int_{-\infty}^{+\infty} v_2(x) \, dx \neq 0$ has the property that $\partial_x^{-1} v_2 \notin L_x^2$.  Hence the symplectic form is only densely defined. Despite this fact, the symplectic projection onto the soliton manifold $\mathcal{M}$ is well-defined due to the fact that the tangent space of $\mathcal{M}$ is spanned by functions that are the derivatives of smooth functions in $L^2$.}   The restriction (pull-back by the inclusion map $i$) of this symplectic form to the two-dimensional \emph{soliton manifold} 
$$\mathcal{M} = \{ \, Q_{a,c} \, | \, a\in \mathbb{R} \,,  \; c>0 \; \}$$ 
is the canonical form
$$i^*(\omega) = 4\pi da \wedge dc$$
A heuristic states that \emph{if we assume} that the solution remains close to $\mathcal{M}$, then the projected flow on $\mathcal{M}$ follows the Hamilton equations corresponding to $i^*(\omega)$
\begin{equation}
\label{E:ODEs-1}
\left\{
\begin{aligned}
&\dot c = \frac{1}{4\pi} \partial_a E(Q_{a,c}) = chW'(ha) + \frac12 c^{-1}h^3W'''(ha) +O(h^5)\\
&\dot a = -\frac{1}{4\pi} \partial_c E(Q_{a,c}) = c- W(ha) + \frac12 c^{-2} h^2 W''(ha) + O(h^4)
\end{aligned}
\right.
\end{equation}

This ODE system in $( a, c)$, scales to an $h$-independent system at leading order.  Let $s= ht$, and consider $ A(s)$ and $ C(s)$ defined by
\begin{equation}
\label{E:convert}
A(ht) = h a(t) \,, \qquad C(ht) =  c(t)
\end{equation}
Then the above becomes
\begin{equation}
\label{E:ODEs-2}
\left\{
\begin{aligned}
&\dot { C} =  CW'( A)+ \frac12 C^{-1}h^2W'''(A) + O(h^4) \\
&\dot { A} =  C- W( A) +\frac12 C^{-2} h^2 W''(A) +O(h^4)
\end{aligned}
\right.
\end{equation}
We have
$$0 \leq s \leq 1 \quad \iff   \quad 0 \leq t \leq h^{-1}$$
The time scale $0 \leq s \ll \ln h^{-1}$ corresponds to $0\leq t \ll\delta h^{-1}\ln h^{-1}$, which we call the \emph{Ehrenfest time}.  Analytically, we will \emph{prove} that a solution starting close to $\mathcal{M}$ remains close to $\mathcal{M}$ for the Ehrenfest time and recover ODEs describing the parameter motion with slightly less precision than \eqref{E:ODEs-1} and \eqref{E:ODEs-2}.

For the statement of our main result, we will need the \emph{reference trajectory}, which is the solution $(\bar A(s), \bar C(s))$ to 
\begin{equation}
\label{E:ref-traj}
\left\{
\begin{aligned}
&\dot {\bar C} =  \bar CW'(\bar A) \\
&\dot {\bar A} =  \bar C- W( \bar A) 
\end{aligned}
\right.
\end{equation}
with initial condition $(\bar A(0),\bar C(0)) = (0,1)$.  This is an $h$-independent system and captures the leading-order $h$-independent terms in the expected full parameter dynamics \eqref{E:ODEs-2}.    We let $S_0>0$ be the first time $s>0$ such that  $\bar C(s)=\frac12$ or $\bar C(s)=2$, and $S_0=+\infty$ if $C(s)$ never reaches either $\frac12$ or $2$.    Thus, for all $0\leq s < S_0$, we have
$$\frac12 \leq \bar C(s) \leq 2$$
Now let
\begin{equation}
\label{E:bar-convert}
\bar a(t)=h^{-1}\bar A(ht)\,, \qquad \bar c(t) = \bar C(ht)
\end{equation}
Throughout the paper, the notation $A \lesssim B$ means there exists a constant $K >0$ such that $A\leq KB$.

\begin{theorem}[main theorem]
\label{T:main}
Given a potential $W\in C_c^\infty(\mathbb{R})$ (as in \eqref{E:potential}), there exists $\kappa \geq 1$, $\mu>0$, and $0<h_0\ll 1$ such that the following holds.  Let $0< h \leq h_0$ and suppose the initial data $u_0\in H_x^1$ satisfies
$$\| u_0(x) - Q_{0,1}(x) \|_{H_x^{1/2}} \leq h^{3/2}$$
Then there exists a trajectory $(a(t),c(t))$ such that $u$ solving (pBO) with initial condition $u_0$ satisfies
\begin{equation}
\label{E:control}
\| u(x,t) - Q_{a(t),c(t)}(x) \|_{H_x^{1/2}} \leq \kappa h^{3/2}e^{\mu ht}
\end{equation}
for $0\leq t \leq T_0=h^{-1}\min( \frac14\mu^{-1} \ln h^{-1}, S_0)$.  

We have the following information about the trajectory $(a(t),c(t))$. Let $(\bar A(s),\bar C(s))$ solve \eqref{E:ref-traj}
on $0\leq s \leq  \min(\frac14\mu^{-1} \ln h^{-1},S_0)$, with initial condition
$(\bar A(0),\bar C(0))=(0,1)$, and then define $(\bar a(t), \bar c(t))$ according to \eqref{E:bar-convert}.  Then we have
\begin{equation}
\label{E:true-vs-ref}
|a(t)-\bar a(t)| \lesssim h e^{2\mu h t}\,, \qquad |c(t)-\bar c(t)| \lesssim h^2 e^{2\mu h t}
\end{equation}
\end{theorem}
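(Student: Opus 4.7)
The plan is to run a modulation-theoretic bootstrap. I decompose
$$u(x,t) = Q_{a(t),c(t)}(x) + w(x,t)$$
and select $(a(t),c(t))$ so that $w$ is $L^2$-orthogonal to two directions spanning the tangent space $T_{Q_{a,c}}\mathcal{M}$, for instance $\la w, \partial_a Q_{a,c}\ra = \la w, \partial_c Q_{a,c}\ra = 0$. Any pair giving a nondegenerate modulation Jacobian will work; the implicit function theorem produces the initial pair $(a(0),c(0))$ from the hypothesis $\|u_0-Q_{0,1}\|_{H^{1/2}} \leq h^{3/2}$. Differentiating the orthogonality conditions in $t$, substituting (pBO), and Taylor-expanding $V$ about $ha$ on the soliton support extracts the factors of $h$ and yields a $2\times 2$ linear system for $(\dot a,\dot c)$ of the form
$$\dot a = c - W(ha) + E_a\,, \qquad \dot c = c h W'(ha) + E_c\,,$$
with remainders $E_a, E_c = O\bigl(h^2 + \|w\|_{H^{1/2}}^2\bigr)$ once the orthogonality is used to cancel the linear-in-$w$ contributions.

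The heart of the argument is a Lyapunov estimate on $w$. Consider
$$\mathcal{F}(t) = E(u) + c(t)M_0(u) - E(Q_{a,c}) - c(t)M_0(Q_{a,c}),$$
expanded around $Q_{a(t),c(t)}$. Because $Q_{a,c}$ is a critical point of $E_0 + cM_0$, the linear part of $\mathcal{F}$ in $w$ reduces to $\int(V-W(ha))Q_{a,c}w\,dx = O(h^2)\|w\|_{L^2}$ after second-order Taylor expansion. The quadratic part is $\tfrac12 \la \mathcal{L}_c w, w\ra$ with
$$\mathcal{L}_c w = -Hw_x + cw - Q_{a,c}w\,,$$
and the key input is the coercivity
$$\la \mathcal{L}_c w, w\ra \gtrsim \|w\|_{H^{1/2}}^2$$
on the orthogonal complement of $\spn\{\partial_a Q_{a,c}, \partial_c Q_{a,c}\}$; this follows from the classical spectral picture of $\mathcal{L}_1$ (one-dimensional kernel $\spn\{Q'\}$, a single negative eigenvalue whose eigenfunction is controlled by the $\partial_c Q$ orthogonality) by scaling. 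Since $E_0 + c(t)M_0$ is conserved by (BO) at frozen $c$, only the potential work and the $\dot c\,M_0(u)$ term drive $\tfrac{d}{dt}\mathcal{F}$, and Sobolev control of the cubic tail $\int w^3 \lesssim \|w\|_{H^{1/2}}^3$ produces the Gronwall inequality
$$\tfrac{d}{dt}\|w\|_{H^{1/2}}^2 \lesssim h\|w\|_{H^{1/2}}^2 + h^3\|w\|_{H^{1/2}} + h^4\,,$$
which, integrated from $\|w(0)\|_{H^{1/2}}^2 \leq h^3$, yields $\|w(t)\|_{H^{1/2}} \leq \kappa h^{3/2}e^{\mu h t}$ on $0\leq t\leq T_0$. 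A continuity/bootstrap argument then propagates the a priori hypothesis $c(t)\in[\tfrac12,2]$ needed to keep constants uniform, valid up to $t = h^{-1}S_0$.

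For the trajectory comparison \eqref{E:true-vs-ref}, subtract \eqref{E:ref-traj} from the modulation system in the slow variable $s = ht$ and Gronwall the linear ODE governing $(A-\bar A, C-\bar C)$: the inhomogeneity is $O(h^2)$ from the subleading Hamilton terms together with $h^{-1}O(\|w\|_{H^{1/2}}^2) = O(h^2 e^{2\mu h t})$ from the quadratic modulation remainder, and integration in $s$ followed by rescaling through \eqref{E:convert} yields $|a-\bar a|\lesssim h e^{2\mu h t}$ and $|c-\bar c|\lesssim h^2 e^{2\mu h t}$. The main obstacle I expect is the coercivity of $\mathcal{L}_c$ at the critical $H^{1/2}$ regularity together with the algebraic cancellation of the linear-in-$w$ part of the modulation equations: the nonlocality of the Hilbert transform makes the bilinear bookkeeping noticeably trickier than in the KdV or NLS analogues, but the identity $\la u, -Hu_x\ra = \|u\|_{\dot H^{1/2}}^2$ aligns the energy with the natural norm and keeps the scheme tractable.
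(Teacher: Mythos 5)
Your overall architecture (modulation decomposition, Lyapunov functional built from the energy--mass combination, bootstrap, Gronwall comparison with \eqref{E:ref-traj}) is the same as the paper's, but there is a genuine gap at the central coercivity step, and it traces back to your remark that ``any pair giving a nondegenerate modulation Jacobian will work.'' It will not. With your choice of $L^2$-orthogonality to the tangent space, $\la w,\partial_aQ_{a,c}\ra=\la w,\partial_cQ_{a,c}\ra=0$, the quadratic form $\la\mathcal{L}_c w,w\ra$ is \emph{not} coercive. At $a=0$, $c=1$ your conditions read $w\perp Q'$ and $w\perp(Q+xQ')$. Using $xQ'=\tfrac12Q^2-2Q$ together with $\mathcal{L}Q=-\tfrac12Q^2$ and $\mathcal{L}Q^2=-2Q-Q^2$, one checks that $\mathcal{L}(xQ')=Q+xQ'$; moreover $\la Q+xQ',\,xQ'\ra=\la Q,xQ'\ra+\|xQ'\|_{L^2}^2=-4\pi+4\pi=0$ and $\la Q',xQ'\ra=0$ by parity. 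Hence $xQ'$ lies in your constrained subspace and satisfies $\la\mathcal{L}(xQ'),xQ'\ra=0$: the form has a null direction there, and the lower bound $\la\mathcal{L}_cw,w\ra\gtrsim\|w\|_{H^{1/2}}^2$ fails. (Equivalently, the Weinstein-type quantity $\la\mathcal{L}^{-1}\partial_cQ,\partial_cQ\ra$ vanishes identically, whereas the quantity that actually drives the argument is $\la\mathcal{L}^{-1}Q,Q\ra=-\la Q+xQ',Q\ra=-4\pi<0$.) This is why the paper imposes the \emph{symplectic} orthogonality conditions \eqref{E:orth}, $\la\eta,Q_{a,c}\ra=\la\eta,(x-a)Q_{a,c}\ra=0$ (orthogonality to $J^{-1}\partial_aQ_{a,c}$ and $J^{-1}\partial_cQ_{a,c}$), and proves coercivity on that subspace in Lemma \ref{L:en}. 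Without this correction your Lyapunov step collapses.

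Two secondary problems with the same root. First, your functional $\mathcal{F}$ is built from the perturbed energy $E$, so it carries a linear-in-$w$ term $\int VQ_{a,c}w$; even after subtracting the constant $W(ha)$, the first-order Taylor piece $hW'(ha)\la(x-a)Q_{a,c},w\ra$ does not vanish under your orthogonality conditions and contributes $O(h\|w\|_{L^2})\sim h^{5/2}$, which swamps the target $\|w\|_{H^{1/2}}^2\sim h^3$; your claimed $O(h^2)\|w\|_{L^2}$ is unjustified. The paper avoids the issue entirely by using $Z_c=cM+E_0$ (free energy), for which $Z_c'(Q_{a,c})=0$ exactly, so $L_c(u)$ in \eqref{E:Lyapunov} is purely quadratic-and-higher, and under \eqref{E:orth} the pairing $\la(x-a)Q_{a,c},\eta\ra$ vanishes wherever it would otherwise appear. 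Second, your modulation remainder $E_c=O(h^2+\|w\|_{H^{1/2}}^2)$ is too weak for the second half of \eqref{E:true-vs-ref}: in the slow variable it gives $\dot C-CW'(A)=O(h)$ and hence only $|c-\bar c|\lesssim h$. One needs $\dot c-chW'(ha)=O(h^3e^{2\mu ht})$ as in \eqref{E:ODE12}, which the paper obtains because the would-be $O(h^2)$ term in Term $III$ of Lemma \ref{L:ODEcontrol} vanishes by the evenness of $Q_{a,c}^2$ about $x=a$ (a cancellation your outline does not record, and which requires care with the Taylor expansion because of the slow $x^{-2}$ decay of $Q$).
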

Notice that \eqref{E:true-vs-ref} implies that we can replace $c(t)$ by $\bar c(t)$ in \eqref{E:control}, but cannot replace $a(t)$ by $\bar a(t)$ unless we sacrifice $h^{3/2}$ accuracy for $h$ accuracy in \eqref{E:control}.

Let us remark on the $O(h^{-1})$ time scale in Theorem \ref{T:main}.  The symplectic restriction heuristic produces the expected ODEs \eqref{E:ODEs-1}.  The rescaling of time given by $s=ht$ converts this ODE system to \eqref{E:ODEs-2}, which has the feature that the leading order terms in both components are \emph{independent of} $h$ and \emph{nontrivial perturbations of the free soliton dynamics} (involve $W$).  Then achieving time $s=O(1)$ (that is, $t=O(h^{-1})$) lets us observe the nontrivial distortions of the position $a$ and scale $c$ parameters.    This makes $O(h^{-1})$ a dynamically relevant time frame as $h\to 0$.

The key technical device needed to control the evolution is the Lyapunov functional $L_c(u)$ appearing in \eqref{E:Lyapunov}.  We are not able to extend Theorem \ref{T:main} beyond the time scale $O(h^{-1}\ln h^{-1})$ due to error terms that appear in the time derivative of $L_c(u)$.  Specifically, at least one term arises that is comparable  to $hL_c(u)$, so that the best possible estimate is
$$|\partial_t L_c(u) | \lesssim h L_c(u)$$
which results in the bound $L_c(u(t)) \lesssim e^{ht} L_c(u(0))$, and this bound is only useful slightly beyond the time scale $O(h^{-1})$.

Let us now provide an overview of related results.  Fr\"ohlich et. al. \cite{FGJS} and Holmer \& Zworski \cite{HZ2} considered Hamiltonian semiclassical perturbations of the 1D nonlinear Schr\"odinger (NLS) equation
$$i\partial_t u + \Delta u - W(hx) u + |u|^2 u =0$$
In \cite{HZ2}, solutions are shown to remain $h^2$ close to a solitary wave profile in the energy space $H^1$ on the time scale $\delta h^{-1} \log h^{-1}$.    Datchev \& Ventura \cite{DV} treated the case of the Hartree nonlinearity, and de Bouard \& Fukuizumi \cite{deBF} considered a stochastic perturbation of NLS.  Fractional NLS equations have been considered by Secchi \& Squassina \cite{SS}, and a variational approach has been employed to study NLS in Benci, Ghimenti \& Micheletti \cite{BGM}.   Holmer \& Lin \cite{HL} considered a related problem of the interaction of two overlapping solitons, and Holmer \& Zworski \cite{HZ1} considered the dynamics of near soliton solutions to an NLS equation perturbed by a weak delta potential.

Dejak \& Sigal \cite{DS} and Holmer \cite{Holmer} studied Hamiltonian semiclassical perturbations of the Korteweg-de Vries (KdV) equation
$$\partial_t u + \partial_x (\partial_x^2 u -W(hx) u +  \partial_x u^2) =0$$
In \cite{Holmer}, solutions are shown to remain $h^{1/2}$ close to a solitary wave profile in the energy space $H^1$ on the time scale $\delta h^{-1} \log h^{-1}$.  A related result pertaining to a nonlinear perturbation of KdV was obtained by Mu\~noz \cite{Mun}.  Lin \cite{Lin} considered a nonHamiltonian perturbation of mKdV.    Dynamics of near double solitons for mKdV under semiclassical perturbation of mKdV was studied by Holmer, Perelman, \& Zworski \cite{HPZ}.  De Bouard and Debussche \cite{deBD} considered a stochastic perturbation of KdV with multiplicative white noise.  

Pocovnicu \cite{Poc, PocErratum} and G\'erard \& Grellier \cite{GerGre} considered the cubic-Szego equation.  Mashkin \cite{Mash1, Mash2, Mash3} has considered perturbations of sine-Gordon kink solitons.    Heuristics on solitary wave perturbation for BO were previously obtained by Matsuno \cite{Matsuno} in two settings -- the BO Burgers equation (adding weak dissipation) and a BO equation with the inclusion of higher-order nonlinear terms.   

Aside from well-posedness issues \cite{Tao, Saut, Iorio, GV, Ponce, KT, KK, MolPil} and solitary wave stability \cite{GTT, KM}, the BO equation and its generalizations have recently been investigated from the point of view of inverse scattering and integrability by Wu \cite{Wu} and  G\'erard \& Kappeler \cite{GerKap}, and singularity formation (blow-up) by Martel \& Pilod \cite{MarPil}.

Theorem \ref{T:main} in the present paper appears to be the first rigorous result of this type for BO.  In comparison to the papers for NLS and KdV \cite{HZ2, Holmer}, the key difficulty results from the slow decay of the soliton $Q$ (at the power rate $x^{-2}$ as $|x|\to +\infty$).  At several points in the argument, the potential $W(ha+h(x-a))$ is Taylor-expanded, producing powers of $(x-a)$, although the soliton and its derivatives can only absorb the first few powers of the Taylor expansion, limiting expansions to quadratic or cubic order.   The issue arises in the two main estimates of the paper appearing in Lemma \ref{L:ODEcontrol} and Lemma \ref{L:energycontrol}.   The treatment of Term III' in \eqref{E:odestuff1} in the proof of Lemma \ref{L:ODEcontrol} is achieved by decomposing the spatial region into $|x-a|<h^{-1}$ and $|x-a|>h^{-1}$, and the Taylor expansion is only applied in the inner region, but to fourth order.  Also, in the treatment of Term III in the proof of Lemma \ref{L:energycontrol}, the Taylor expansion is limited to third order and terms resulting from the remainder in Taylor's formula are handled using the estimate \eqref{E:q-bound}.  Another difficulty, in comparison to the NLS and KdV results, is that several terms in the estimates involve a Hilbert transform.  Two lemmas in \S\ref{S:estimates} are given to handle such terms.  For example, they are applied in the treatment of Term III in Lemma \ref{L:energycontrol}.

We now give an overview of the organization of the paper.
In \S\ref{A:notation}, we state our notational conventions and review the definition and basic properties of the soliton profile $Q_{a, c}$ and its linearization $\mathcal{L}$.  In \S\ref{S:heuristics}, we give a heuristic derivation of the soliton dynamics \eqref{E:ODEs-1} following the principle of symplectic restriction, as previously described in \cite{HZ1, HL}.  In \S\ref{S:estimates}, we provide two estimates for quadratic forms involving the Hilbert transform and cutoffs, that are needed later in the proof of Lemma \ref{L:ODEcontrol}.  In \S\ref{S:energybd}, we state and prove Lemma \ref{L:en}, the spectral lower bound on $\mathcal{L}$, following ideas of Weinstein \cite{Wei} and Fr\"ohlich et. al. \cite{FGJS}, and using the explicit spectral resolution of $\mathcal{L}$ provided in the Appendix of Bennett et. al \cite{BBSSB}.  At this point, all of the technical ingredients for the proof of Theorem \ref{T:main} are in place.  In \S\ref{S:setup}, the proof of Theorem \ref{T:main} is reduced to Prop. \ref{P:main}, where the approximate ODEs \eqref{E:ODE12} replace the comparison to exact reference ODEs in \eqref{E:true-vs-ref}.  The proof that Prop. \ref{P:main} implies Theorem \ref{T:main} involves invoking an elementary Gronwall type estimate, given as Lemma \ref{L:gronwall}.  In the remainder of the paper, the proof of Prop. \ref{P:main} is given.  It consists of two main estimates.  First, in \S\ref{S:ODEcontrol}, control on the parameter ODEs is obtained, assuming control on the remainder function, by computing the derivatives of the orthogonality conditions.  Second, in \S\ref{S:remainder}, the remainder function is controlled assuming the parameter ODEs hold with sufficient accuracy.  This is accomplished by taking the derivative of the Lyapunov function, and appealing to Lemma \ref{L:en}.  The two lemmas Lemma \ref{L:ODEcontrol} and Lemma \ref{L:energycontrol}, coupled together, complete the proof of Prop. \ref{P:main}, which is written following the statement of Lemma \ref{L:energycontrol} in \S\ref{S:remainder}.

\subsection{Acknowledgment}
This project began while the author was reading papers \cite{KM, Tao, HZ2, Holmer} as part of her preparation for the ``topics exam'' at Brown in 2015-2016, under the direction of Justin Holmer.  The author is grateful for his assistance on this project at that time.

\section{Notation, soliton profile and linearization properties}
\label{A:notation}

We take the Fourier transform in 1D as
$$\hat f(\xi) = \int e^{-ix\xi} f(x) \, dx$$
and inverse Fourier transform
$$\check g(x) = \frac{1}{2\pi} \int e^{ix\xi} g(\xi)\, d\xi$$
We define the Hilbert transform as
$$Hf(x) = -\frac{1}{\pi} \pv \int_{-\infty}^{+\infty} \frac{f(y)}{x-y} \, dy = -\frac{1}{\pi} \pv \frac{1}{x} * f$$
and hence
$$\widehat{Hf}(\xi) = i (\sgn \xi) \hat f(\xi)$$
The fractional derivative operator $D^s$ is defined as
$$\widehat{D^sf}(\xi) = |\xi|^s \hat f(\xi)$$
and thus $-H\partial_x = D$.   Define the soliton profile (in standard position and scale) as
\begin{equation}
\label{E:D2}
Q(x) = \frac{4}{1+x^2}
\end{equation}
We have the partial fraction decomposition
$$\frac{1}{1+y^2} \frac{1}{x-y} = - \frac{1}{1+x^2}\frac{1}{y-x} + \frac{x}{1+x^2} \frac{1}{1+y^2} + \frac{1}{1+x^2} \frac{y}{1+y^2}$$
and hence (since first and third term integrate to zero)
$$HQ(x) = -\frac{4}{\pi} \frac{x}{1+x^2} \int \frac{dy}{1+y^2} = \frac{-4x}{1+x^2}$$
Taking the derivative in $x$ we have
$$HQ_x = -\frac{8}{(1+x^2)^2} + \frac{4}{1+x^2} = -\frac12Q^2+Q$$
which is the soliton profile equation, which we rewrite for convenient reference:
\begin{equation}
\label{E:D3}
-HQ_x + Q - \frac12 Q^2 = 0
\end{equation}
  We will also need that
\begin{equation}
\label{E:D4}
\int Q^2 = 8\pi \,, \qquad \int Q^3 = 24\pi \,, \qquad \int x^2Q^2 = 8 \pi
\end{equation}
Recall that
$$E_0(Q) = -\frac12 \int Q \, HQ_x \, dx - \frac16 \int Q^3$$
Substituting \eqref{E:D3}, 
$$E_0(Q) = -\frac12 \int Q^2 + \frac1{12} \int Q^3$$
Plugging in \eqref{E:D4}, we obtain
\begin{equation}
\label{E:D5}
E_0(Q) = -2\pi
\end{equation}
Taking 
$$Q_{a, c}(x) = cQ(c(x-a))$$
we obtain
\begin{equation}
\label{E:D6}
\int Q_{a, c}^2 = c\int Q^2 = 8\pi c \,, \qquad \text{and} \qquad E_0(Q_{a, c}) = c^2 E_0(Q) = -2\pi c^2
\end{equation}
Let 
$$\mathcal{L} = I - H\partial_x - Q$$
Key properties of $\mathcal{L}$ follow by differentiating \eqref{E:scaled-sol} with respect to $a$ and with respect to $c$.  Specifically, we have
\begin{equation}
\label{E:D15}
\mathcal{L}Q_x = 0 \, \qquad \text{and} \quad \mathcal{L}(Q+xQ') = -Q
\end{equation}
Key spectral properties of $\mathcal{L}$ follow from the two identities
\begin{equation}
\label{E:LQLQ2}
\mathcal{L} Q = - \frac12 Q^2 \,, \qquad \mathcal{L} Q^2 = -2Q - Q^2
\end{equation}
Summary of derivation:
\begin{itemize}
\item direct computation gives $xQ' = \frac12 Q^2 - 2Q$
\item substitute into $\mathcal{L}(Q+xQ') = -Q$ to obtain $\mathcal{L}(-Q+\frac12Q^2) = -Q$
\item soliton equation gives $\mathcal{L}Q = (Q-H\partial_xQ -\frac12Q^2) - \frac12Q^2 = -\frac12Q^2$
\item add to get $\mathcal{L}(\frac12Q^2) = -Q - \frac12Q^2$
\end{itemize}
By \eqref{E:LQLQ2}
$$\mathcal{L}(2\alpha Q + \beta Q^2) = -2\beta Q - (\alpha+\beta)Q^2$$
By suitably selecting $\alpha$ and $\beta$, we can achieve the two eigenfunctions corresponding to $\lambda_+$ and $\lambda_-$ in the following proposition.
To achieve eigenfunctions, we need $\frac{\beta}{\alpha} = \frac{\alpha+\beta}{\beta}$, which yields a quadratic equation with solutions $\frac{\beta}{\alpha} = \frac12 \pm \frac12 \sqrt 5$.  If we take $\alpha =1$ and $\beta=\frac12+\frac12\sqrt{5}$ and 
$$e_- = 2Q + (\tfrac12+\tfrac12\sqrt{5})Q^2 \,, \qquad \lambda_- = - (\tfrac12+\tfrac12\sqrt{5})$$
then $\mathcal{L}e_- = \lambda_- e_-$.

\begin{proposition}[from Appendix of \cite{BBSSB}]
\label{P:Lspectral}
The operator $\mathcal{L}$ has exactly four eigenvalues
$$
\lambda_1= 1, \quad \lambda_0 =0, \quad \lambda_+ = \frac{-1+\sqrt{5}}{2}, \quad \lambda_- =\frac{-1-\sqrt{5}}{2}
$$
and a continuous spectrum $[1, +\infty)$.
Moreover, the corresponding eigenspaces are one-dimensional, the eigenfunction for $\lambda_0 =0$ is $ Q'$, and the (non normalized) eigenfunction for $\lambda_-$ is 
$$e_- = 2Q + \tfrac12(1+\sqrt{5}) Q^2$$
\end{proposition}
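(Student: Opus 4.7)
\medskip
\noindent\textbf{Proof proposal.}

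My plan is to combine three standard ingredients: (i) self-adjointness of $\mathcal{L}$, (ii) Weyl's essential spectrum theorem to identify $[1,+\infty)$ as the continuous spectrum, and (iii) an explicit ODE reduction on the Fourier side to catalog the point spectrum and verify simplicity. For (i), I would set up $\mathcal{L} = I + D - Q$ on $L^2(\mathbb{R})$ with domain $H^1(\mathbb{R})$. The free part $I + D$ is the self-adjoint Fourier multiplier by $1+|\xi|$, while $Q$ is a bounded real multiplication operator, so $\mathcal{L}$ is self-adjoint. For (ii), I would show $Q$ is relatively compact with respect to $I+D$: the resolvent $(I+D+1)^{-1}$ maps $L^2$ into $H^1$, and multiplication by $Q\in L^\infty$ with $Q(x) \to 0$ is a compact operator from $H^1$ to $L^2$ by a Rellich-type argument using the decay of $Q$. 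Weyl's theorem then yields $\sigma_{\mathrm{ess}}(\mathcal{L}) = \sigma_{\mathrm{ess}}(I+D) = [1, +\infty)$.

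For the point spectrum, three of the four eigenfunctions have already been exhibited in the preceding discussion: $\mathcal{L} Q' = 0$ gives $\lambda_0 = 0$, and the identity $\mathcal{L}(2\alpha Q + \beta Q^2) = -2\beta Q - (\alpha+\beta) Q^2$ together with the choice $\beta/\alpha = \tfrac12 \pm \tfrac12\sqrt{5}$ produces the eigenpairs $(\lambda_\pm, 2\alpha Q + \beta Q^2)$. To locate the remaining eigenvalue $\lambda_1 = 1$ (which sits at the threshold of the essential spectrum) and to prove that every eigenspace is one-dimensional, I would transfer the problem to Fourier space. Using $\hat Q(\xi) = 4\pi e^{-|\xi|}$, the eigenvalue equation $\mathcal{L} u = \lambda u$ becomes
$$
(1+|\xi|-\lambda)\,\hat u(\xi) \;=\; 2 \int_{\mathbb{R}} e^{-|\xi-\eta|}\,\hat u(\eta)\, d\eta \;=:\; \psi(\xi).
$$
Since $e^{-|\cdot|}$ is the Green's function of $1-\partial_\xi^2$, one has $\psi - \psi'' = 4\hat u$, and substituting $\hat u = \tfrac14(\psi-\psi'')$ into the left-hand side yields, on each half-line $\pm\xi>0$, the second-order linear ODE
$$
(1+|\xi|-\lambda)(\psi - \psi'') = 4\psi,
$$
together with matching conditions at $\xi=0$ (continuity of $\psi$ and a prescribed jump of $\psi'$ coming from the kink in $|\xi|$).

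Under the change of variable $\zeta = 2(1+|\xi|-\lambda)$ this ODE is of Whittaker type, and the requirement that $\hat u \in L^2$ forces $\psi$ to grow at most linearly at $\pm\infty$ and to be square-integrable after division by $(1+|\xi|-\lambda)$. For $\lambda<1$ this $L^2$-admissibility selects exactly the quantized values $\lambda \in \{\lambda_-,\lambda_+,0\}$; at $\lambda = 1$ a borderline admissible solution persists, giving the embedded threshold eigenvalue $\lambda_1 = 1$; and for $\lambda>1$ the admissibility fails, ruling out embedded eigenvalues inside the continuous spectrum. Because on each half-line the ODE has a one-dimensional space of admissible solutions and the matching at $\xi=0$ is a single linear condition, each eigenspace is one-dimensional.

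The main obstacle is the fourth step: the explicit Whittaker analysis combined with the matching at $\xi = 0$, together with ruling out embedded eigenvalues in $(1, +\infty)$ and confirming that $\lambda_1 = 1$ at the edge of the essential spectrum actually supports an $L^2$ eigenfunction. The three ``elementary'' eigenvalues $\lambda_0, \lambda_\pm$ come for free from the algebraic identities already derived for $\mathcal{L}$ acting on $Q$ and $Q^2$, but the complete classification (and simplicity at $\lambda_1$) is precisely what the appendix of \cite{BBSSB} establishes by this ODE route.
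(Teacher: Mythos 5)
The paper does not actually prove this proposition: it is quoted verbatim from the appendix of \cite{BBSSB}, and the only portion the paper derives itself is the elementary part of your argument --- the identities $\mathcal{L}Q=-\tfrac12 Q^2$ and $\mathcal{L}Q^2=-2Q-Q^2$, the resulting eigenpairs for $\lambda_\pm$, and $\mathcal{L}Q'=0$. Your first two steps (self-adjointness of $I+D-Q$ on $H^1$, and $\sigma_{\mathrm{ess}}(\mathcal{L})=[1,\infty)$ by relative compactness of multiplication by the decaying $Q$ plus Weyl's theorem) are correct and standard, and your Fourier-side reduction is set up correctly: with $\hat Q(\xi)=4\pi e^{-|\xi|}$ and the paper's normalization, the eigenvalue equation is indeed $(1+|\xi|-\lambda)\hat u = 2e^{-|\cdot|}*\hat u$, and eliminating $\hat u$ via $(1-\partial_\xi^2)e^{-|\xi|}=2\delta$ yields the stated second-order ODE on each half-line. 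This is essentially the route taken in the cited appendix.

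As a standalone proof, however, the proposal has a genuine gap exactly where you flag it: the assertions that $L^2$-admissibility quantizes $\lambda$ to $\{\lambda_-,\lambda_+,0\}$ below the threshold, that $\lambda=1$ supports a threshold eigenfunction, that there are no embedded eigenvalues in $(1,\infty)$, and that every eigenspace is simple are stated but not derived; these are the entire nontrivial content of the proposition, since the three eigenpairs you exhibit only show the list is a lower bound for the point spectrum. Two smaller inaccuracies in the sketch: for $\lambda>1$ the coefficient $1+|\xi|-\lambda$ vanishes at $|\xi|=\lambda-1$, so the ODE acquires interior singular points and ruling out embedded eigenvalues is not merely an admissibility-at-infinity statement; and the matching at $\xi=0$ should be continuity of both $\psi$ and $\psi'$ (the kink lives in the coefficient $|\xi|$, not in $\psi=2e^{-|\cdot|}*\hat u$, which is $C^1$), so the dimension count for simplicity must use both conditions. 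Since the paper itself imports the full classification as a black box from \cite{BBSSB}, deferring to that reference as you do at the end is defensible, but then your ``proof'' of the hard part reduces to the same citation the paper makes.
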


The above properties of $\mathcal{L}$ convert, via scaling, to corresponding properties of $\mathcal{L}_{a, c}$, where
$$\mathcal{L}_{a,c} = cI - H\partial_x - Q_{a,c}$$
For example, \eqref{E:D15} implies 
$$\mathcal{L}_{a,c}( \partial_a Q_{a,c}) =0 \,, \qquad \mathcal{L}_{a,c} (\partial_c Q_{a,c}) = -Q_{a,c}$$
In the case $a=0$, we simply denote the operator $\mathcal{L}_c = \mathcal{L}_{0,c}$.

For any nonlinear functional $\mathcal{A}: L^2(\mathbb{R})\to \mathbb{R}$ (perhaps densely defined), the Frechet derivative $\mathcal{A}'(u)\in L^2$ can be identified with a function, and the second derivative $\mathcal{A}''(u)$ can be regarded as an operator $L^2\to L^2$, by the identifications
$$\forall \; v\in L^2 \,, \quad \mathcal{A}'(u)(v) = \la \mathcal{A}'(u),v\ra$$
$$\forall \; v_1,v_2 \in L^2 \,, \quad \mathcal{A}''(u)(v_1,v_2) = \la \mathcal{A}''(u)v_1,v_2\ra$$
For our analysis, there are a few important functionals.  The free Hamiltonian is
$$E_0(u) = \frac12 \|D^{1/2} u \|_{L^2}^2 - \frac16 \int u^3$$
For the first and second derivatives, we have
$$E_0'(u) = Du - \frac12 u^2$$
$$E_0''(u) = D - u$$
We also consider the perturbed Hamiltonian 
$$E(u) = E_0(u) + \frac12 \int Vu^2$$
For the first and second derivatives, we have
$$E'(u) = E_0'(u) + Vu$$
$$E''(u) = E_0''(u) + V$$
The mass functional is
$$M(u) = \frac12 \|u\|_{L^2}^2$$
For the first and second derivatives
$$M'(u) = u$$
$$M''(u) = I$$
Consider the combined functional
$$Z_c(u) = cM(u) + E_0(u)$$
For the first and second derivatives (holding $c$ fixed), we have
$$Z_c'(u) = cu - H\partial_x u - \frac12 u^2$$
$$Z_c''(u) = c - H\partial_x  - u$$
Using the equation for $Q_{a,c}$, we obtain
$$Z_c'(Q_{a,c}) = 0$$
We note that 
$$\mathcal{L}_{a,c} = Z_c''(Q_{a,c})$$
With $\eta \defeq u - Q_{a,c}$, let
\begin{equation}
\label{E:Lyapunov}
\begin{aligned}
L_c(u) &= Z_c(u) -Z_c(Q_{a,c})- \la Z_c'(Q_{a,c}), \eta\ra \\
&= Z_c(u) - Z_c(Q_{a,c}) \\
&= \frac12 \la \mathcal{L}_{a,c} \eta, \eta \ra - \frac16 \int \eta^3
\end{aligned}
\end{equation}
i.e. $L_c(u)$ is the quadratic and higher order part of $Z_c(u)$ around the reference function $Q_{a,c}$.   This will be used as the Lyapunov functional in \S\ref{S:remainder}.

\section{Symplectic restriction heuristics}
\label{S:heuristics}

The phase space is $N=L^2(\mathbb{R}\to \mathbb{R})$ (real-valued $L^2$ functions on $\mathbb{R}$), with the inner product
$$
\la v_1,v_2 \ra = \int v_1 v_2 \, dx
$$
The energy is
\begin{equation}
\label{E:D7}
E(u) =  \frac12 \| D^{1/2}u\|_{L^2}^2 - \frac16 \int u^3 + \frac12 \int Vu^2
\end{equation}
which is a densely defined nonlinear map $N\to \mathbb{R}$.
With respect to the inner product $\la \cdot, \cdot, \ra$, $E'(u)$ is identified with an element of $N$ and $E''(u)$ is identified with a map $N\to N$, which are given explicitly by
$$E'(u) = Du - \frac12 u^2 + Vu = -Hu_x - \frac12 u^2 + Vu$$
$$E''(u) = -H\partial_x - u + V$$
Introduce the operator $J:N\to N$ given by $J = \partial_x$, which is skew with respect to $\la \cdot, \cdot \ra$.  This gives a symplectic form on $N$
$$\omega(v_1,v_2) = \la v_1, J^{-1}v_2 \ra$$
which is only densely defined.  The corresponding Hamiltonian flow is
$$\partial_t u = JE'(u)$$
which is precisely (pBO).  

Let us consider the \emph{soliton manifold} $M$ given by
$$M = \{ \; Q_{a,c} \; \}_{a\in \mathbb{R}, c>0}$$
where
$$Q_{a,c}(x) = cQ(c(x-a))$$
From \eqref{E:D3}, we obtain that $Q_{a,c}$ solves the equation
\begin{equation}
\label{E:D8}
-HQ_{a,c}' + cQ_{a,c} - \frac12 Q_{a,c}^2=0
\end{equation}
It is simpler to phrase some calculations using the group action
$$g_{a,c}f(x) = cf(c(x-a))$$
so that, in particular, $Q_{a,c} = g_{a,c}Q$.
To compute the restricted symplectic form, we need
$$\partial_a Q_{a,c} = - \partial_x Q_{a,c} = -c g_{a,c}\partial_x Q$$
$$\partial_c Q_{a,c} = c^{-1} g_{a,c} \partial_x (xQ) = c^{-2} \partial_x g_{a,c}(xQ)$$
It is also convenient to use the following (which follows by the change of variable $y=c(x-a)$)
$$\la g_{a,c}v_1, g_{a,c} v_2 \ra = c \la v_1,v_2\ra $$
We compute
\begin{align*}
\omega( \partial_a Q_{a,c}, \partial_c Q_{a,c}) &= \la \partial_a Q_{a,c}, J^{-1} \partial_cQ_{a,c}\ra = -c^{-1} \la g_{a,c} Q', g_{a,c} (xQ) \ra \\
&= -\la Q',xQ\ra = \frac12 \|Q\|_{L^2}^2 = 4\pi
\end{align*}
Thus, the restricted symplectic form is
$$i^*(\omega) = 4\pi da \wedge dc$$
where $i:M \to N$ is the inclusion.  This form is non degenerate, so $M$ is a \emph{symplectic submanifold}.  Let us now restrict the Hamiltonian $E$ to $M$.  
$$E(Q_{a,c}) = \frac12 \| D^{1/2} Q_{a,c}\|^2 - \frac16 \| Q_{a,c} \|_{L^3}^3+ \frac12 \int V Q_{a,c}^2 $$
To compute the first term, we substitute the soliton equation
$$\frac12 \| D^{1/2} Q_{a,c}\|^2 = -\frac12 \la HQ_{a,c}',Q_{a,c} \ra = -\frac12 c \| Q_{a,c}\|_{L^2}^2  + \frac14 \|Q_{a,c}\|_{L^3}^3$$
Hence
$$E(Q_{a,c}) =  -\frac12 c \| Q_{a,c}\|_{L^2}^2 + \frac1{12} \| Q_{a,c} \|_{L^3}^3 + \frac12 \int VQ_{a,c}^2$$
$$E(Q_{a,c}) = c^2( -\frac12  \| Q\|_{L^2}^2 + \frac1{12} \| Q \|_{L^3}^3) + \frac12 \int VQ_{a,c}^2$$
Substituting the values of the integrals \eqref{E:D4},
\begin{equation}
\label{E:D10}
E(Q_{a,c}) = -2\pi c^2 + \frac12 \int VQ_{a,c}^2
\end{equation}
Now we view the two-dimensional symplectic manifold $M$, with sympletic form $4\pi da\wedge dc$, and the Hamiltonian $E(Q_{a,c})$ (which is just a function $M\to \mathbb{R}$), as a two-dimensional Hamiltonian system.  The corresponding equations of motion are
\begin{equation}
\label{E:D9}
\left\{
\begin{aligned}
&\dot c = \frac{1}{4\pi} \partial_a E(Q_{a,c})\\
&\dot a = -\frac{1}{4\pi} \partial_c E(Q_{a,c})
\end{aligned}
\right.
\end{equation}
For this, we will want an asymptotic (as $h\to 0$) computation of $\frac12 \int V Q_{a,c}^2$.  Changing variable to $y=c(x-a)$, we obtain
$$\frac12 \int W(hx) c^2 Q(c(x-a))^2 \, dx = \frac12 c \int W(ha + hc^{-1}y) Q(y)^2 \, dy$$
$$=\frac12 c \int (W(ha) + \frac12 h^2c^{-2}y^2W''(ha) + O(h^3)y^3) Q(y)^2 \, dy$$
where the odd terms in the Taylor expansion are dropped since they have zero integral
(although actually the term $\int y^3Q(y)^2$ is not absolutely convergent, but since $\supp W$ is compact, the integral can be localized to $|x|\lesssim h^{-1}$, leaving only a $\log h^{-1}$ loss).
$$ = \frac12 cW(ha) \int Q^2 + \frac14 h^2c^{-1}W''(ha) \int y^2Q(y)^2 \, dy + O(h^3)$$
Substituting the values of the integrals \eqref{E:D4},
$$\frac12 \int W(hx) c^2 Q(c(x-a))^2 \, dx  = 4\pi cW(ha) + 2\pi c^{-1} W''(ha) h^2 + O(h^3)$$
Plugging into \eqref{E:D10}, we obtain
$$E(Q_{a,c}) = -2\pi c^2 + 4\pi cW(ha) + 2\pi c^{-1} W''(ha) h^2 + O(h^3)$$
Plugging in to \eqref{E:D9}, we get
$$
\left\{
\begin{aligned}
&\dot c = \frac{1}{4\pi} \partial_a E(Q_{a,c}) = chW'(ha) + \frac12 c^{-1}h^3W'''(ha) +O(h^4)\\
&\dot a = -\frac{1}{4\pi} \partial_c E(Q_{a,c}) = c- W(ha) + \frac12 c^{-2} h^2 W''(ha) + O(h^3)
\end{aligned}
\right.
$$
These indeed match, to leading order, the ODEs \eqref{E:ref-traj} describing the parameter dynamics in Theorem \ref{T:main}.

\section{Estimates for the Hilbert transform}
\label{S:estimates}

Below we provide two estimates for quadratic forms involving the Hilbert transform, Lemma \ref{L:co1}, \ref{L:co2}, that will later be needed in the proof of Lemma \ref{L:energycontrol}.

\begin{lemma}
\label{L:co1}
For $\chi \in C_c^\infty(\mathbb{R})$ and $0< h \leq 1$, we have
\begin{equation}
\label{E:co18}
\left| \int_y  \chi(hy)  \cdot w \cdot H \partial_y w \, dy \right| \lesssim \| w \|_{H_y^{1/2}}^2
\end{equation}
where the implicit constant depends on $\chi$ but is uniform in $h$.
\end{lemma}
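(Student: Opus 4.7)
The plan is to convert the Hilbert transform into a positive fractional Laplacian and then symmetrize. Since $\widehat{H\partial_y f}(\xi) = i(\sgn \xi)(i\xi)\hat f(\xi) = -|\xi|\hat f(\xi)$, we have the operator identity $H\partial_y = -D$. Using the self-adjointness of $D^{1/2}$ and the factorization $D = (D^{1/2})^2$, the integral in \eqref{E:co18} becomes
\begin{equation*}
-\int \chi(hy)\, w \cdot Dw \, dy = -\int D^{1/2}\bigl(\chi(hy)\, w\bigr) \cdot D^{1/2} w \, dy,
\end{equation*}
so by Cauchy--Schwarz the whole estimate reduces to the uniform multiplier bound
\begin{equation*}
\| \chi(hy)\, w \|_{H_y^{1/2}} \lesssim \| w \|_{H_y^{1/2}}, \qquad 0 < h \leq 1,
\end{equation*}
with constant depending only on $\chi$.

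For this multiplier bound, the $L^2$ piece is trivial from $\|\chi\|_{L^\infty}$. For the $\dot H^{1/2}$ piece, the natural tool is the Gagliardo seminorm
\begin{equation*}
\|f\|_{\dot H^{1/2}}^2 \;\sim\; \iint \frac{|f(x)-f(y)|^2}{|x-y|^2}\, dx\, dy.
\end{equation*}
Writing $\chi(hx)w(x)-\chi(hy)w(y) = \chi(hx)\bigl(w(x)-w(y)\bigr) + \bigl(\chi(hx)-\chi(hy)\bigr)w(y)$ and applying $(a+b)^2 \leq 2a^2+2b^2$, the double integral splits in two. The first piece is controlled by $2\|\chi\|_{L^\infty}^2 \|w\|_{\dot H^{1/2}}^2$. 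For the second, the key scale-aware estimate is
\begin{equation*}
\int \frac{|\chi(hx)-\chi(hy)|^2}{|x-y|^2}\, dx \;\leq\; \int_{|x-y|\leq h^{-1}} h^2\|\chi'\|_{L^\infty}^2\, dx \;+\; \int_{|x-y|>h^{-1}} \frac{4\|\chi\|_{L^\infty}^2}{|x-y|^2}\, dx \;\lesssim\; h,
\end{equation*}
obtained by using the Lipschitz bound $|\chi(hx)-\chi(hy)| \leq h\|\chi'\|_{L^\infty}|x-y|$ on short scales and the crude bound $2\|\chi\|_{L^\infty}$ on long scales. Integrating against $|w(y)|^2\, dy$ yields $\lesssim h\|w\|_{L^2}^2 \leq \|w\|_{L^2}^2$ since $h \leq 1$.

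The hard part is precisely this uniform-in-$h$ multiplier estimate. A direct application of a fractional Leibniz rule for $D^{1/2}$ would demand control on $\|D^{1/2}\chi(h\cdot)\|_{L^\infty}$, which is not available, so one has to exploit the specific interaction between the slow scale $h^{-1}$ of $\chi(h\cdot)$ and the $|x-y|^{-2}$ kernel in the Gagliardo form. The gain from the quadratic denominator on long scales is balanced against the Lipschitz growth on short scales to yield a total contribution of size $h$, which is harmlessly absorbed into $\|w\|_{L^2}^2$ using $h \leq 1$. With the multiplier estimate in hand, Cauchy--Schwarz closes \eqref{E:co18}.
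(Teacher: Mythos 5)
Your proof is correct, and the reduction via $H\partial_y = -D$, self-adjointness of $D^{1/2}$, and Cauchy--Schwarz is identical to the paper's first step. Where you diverge is in establishing the uniform multiplier bound $\|\chi(h\cdot)w\|_{H^{1/2}} \lesssim \|w\|_{H^{1/2}}$: you use the Gagliardo--Slobodeckij characterization of $\dot H^{1/2}$ and a scale-splitting of the kernel at $|x-y| = h^{-1}$ (Lipschitz bound on short scales, crude $L^\infty$ bound on long scales), whereas the paper applies the Kato--Ponce fractional Leibniz rule \eqref{E:FL} with $\alpha_1=\alpha_2=\tfrac14$, followed by the Sobolev embedding $\|f\|_{L^4}\lesssim\|D^{1/4}f\|_{L^2}$ and interpolation estimates showing $\|D^{1/2}[\chi(h\cdot)]\|_{L^\infty}\lesssim h^{1/2}$ and $\|D^{1/2}[\chi(h\cdot)]\|_{L^2}\lesssim h^{1/2}$. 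Your route is more elementary and self-contained (no appeal to the KPV commutator estimate), and your key bound $\sup_y\int |\chi(hx)-\chi(hy)|^2|x-y|^{-2}\,dx\lesssim h$ checks out: the short-scale region contributes $\lesssim h^2\cdot h^{-1}=h$ and the long-scale region contributes $\lesssim h$ as well. One small correction to your commentary: you assert that $\|D^{1/2}\chi(h\cdot)\|_{L^\infty}$ is ``not available,'' but by scaling $D^{1/2}[\chi(h\cdot)](y)=h^{1/2}(D^{1/2}\chi)(hy)$, so this norm is $\lesssim h^{1/2}$ and the paper exploits exactly this; the Leibniz-rule route therefore does close, it simply requires the extra interpolation/embedding machinery that your direct kernel estimate avoids.
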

\begin{proof}
By \cite[Theorem A.8]{KPV}, the fractional Leibniz rule states:  for $0<\alpha<1$, $0 \leq \alpha_1,\alpha_2 \leq \alpha$ with $\alpha = \alpha_1+\alpha_2$, $1<p,p_1,p_2<\infty$ with $\frac{1}{p} = \frac{1}{p_1}+\frac{1}{p_2}$, there holds
\begin{equation}
\label{E:FL}
\| D_x^\alpha(fg) - f D_x^\alpha g - g D_x^\alpha f \|_{L_x^p} \lesssim \|D_x^{\alpha_1} f\|_{L_x^{p_1}} \|D_x^{\alpha_2} g \|_{L_x^{p_2}}
\end{equation}
To prove \eqref{E:co18}, we write
$$
\int_y  \chi(hy)  \cdot w \cdot H \partial_y w \, dy = -\int_y  D_y^{1/2}[\chi(hy)  \cdot w] \cdot D_y^{1/2} w \, dy
$$
so by Cauchy-Schwarz,
\begin{equation}
\label{E:FL3}
\left| \int_y  \chi(hy)  \cdot w \cdot H \partial_y w \, dy \right| \leq \| D_y^{1/2}[\chi(hy)  \cdot w]\|_{L_y^2} \|D_y^{1/2} w\|_{L_y^2}
\end{equation}
We apply \eqref{E:FL} with $\alpha=\frac12$, $\alpha_1=\alpha_2 = \frac14$, and $p=2$, $p_1=p_2=4$, to obtain
$$
\| D_y^{1/2}[\chi(hy)  \cdot w]  - D_y^{1/2}[\chi(hy)] \cdot w - \chi(hy) \cdot D_y^{1/2} w \|_{L_y^2} \lesssim \| D_y^{1/4}[\chi(hy)] \|_{L_y^4} \| D_y^{1/4} w \|_{L_y^4}
$$
To the right side, we apply the Sobolev embedding $\|f\|_{L^4} \lesssim \| D^{1/4} f\|_{L^2}$, to obtain
\begin{equation}
\label{E:FL2}
\| D_y^{1/2}[\chi(hy)  \cdot w]\|_{L_y^2} \lesssim  
\begin{aligned}[t]
&\| D_y^{1/2}[\chi(hy)]\|_{L_y^\infty} \|w\|_{L_y^2} + \| \chi(hy) \|_{L_y^\infty} \| D_y^{1/2} w\|_{L_y^2} \\
&+ \| D_y^{1/2}[\chi(hy)] \|_{L_y^2} \| D_y^{1/2} w \|_{L_y^2}
\end{aligned}
\end{equation}
By the interpolation estimates, 
$$\| D_y^{1/2}[\chi(hy)]\|_{L_y^\infty} \lesssim \|\chi(hy) \|_{L_y^2}^{1/2} \|[\chi(hy)]'' \|_{L_y^2}^{1/2} \lesssim h^{1/2} \|\chi\|_{L^2}^{1/2} \|\chi''\|_{L^2}^{1/2} $$
$$\|D_y^{1/2}[\chi(hy)]\|_{L_y^2} \lesssim \| \chi(hy) \|_{L^2}^{1/2} \| [\chi(hy)]'\|_{L_y^2}^{1/2} \lesssim h^{1/2} \|\chi\|_{L^2}^{1/2} \|\chi'\|_{L^2}^{1/2}$$
inserted into \eqref{E:FL2} we obtain
$$
\| D_y^{1/2}[\chi(hy)  \cdot w]\|_{L_y^2} \lesssim \|w\|_{H_y^{1/2}}$$
where now the implicit constant depends on $\chi$, but is independent of $h$ for $0<h\leq 1$.  Plugging into \eqref{E:FL3}, we obtain the desired estimate.
\end{proof}

\begin{lemma}
\label{L:co2}
For $\chi \in C_c^\infty(\mathbb{R})$ and $0<h \leq 1$, we have
\begin{equation}
\label{E:co19}
\left| \int_y  \chi(hy)  \cdot  H \partial_y w \cdot  \partial_y w \, dy \right| \lesssim h^2 \|w\|_{L_y^2}^2 
\end{equation}
where the implicit constant depends on $\chi$ but is uniform in $h$.
\end{lemma}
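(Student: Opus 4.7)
My plan is to rewrite the left-hand side as a commutator expression and then use the smoothness of the slowly varying factor $\chi(h\cdot)$ to integrate by parts and extract two factors of $h$. Unlike Lemma~\ref{L:co1}, where the single-derivative form was controlled by $\|w\|_{H^{1/2}}^2$ via fractional Leibniz, the symmetric structure of the form here should have enough cancellation to trade \emph{both} derivatives for factors of $h$. Let $\mathcal{I} := \int \chi(hy)\, H\partial_y w \cdot \partial_y w\, dy$.

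First, exploiting the skew-adjointness of $H$, I would rewrite $\int H\partial_y w \cdot (\chi(h\cdot)\, \partial_y w)\, dy$, move $H$ across, and expand $H(\chi(h\cdot)\,\cdot\,) = \chi(h\cdot) H + [H, \chi(h\cdot)]$; the $\chi(h\cdot) H$ contribution returns $-\mathcal{I}$, and solving yields the symmetric identity
$$\mathcal{I} = -\tfrac{1}{2}\int \partial_y w \cdot [H, \chi(h\cdot)]\, \partial_y w\, dy.$$
The commutator $[H, \chi(h\cdot)]$ has the kernel $K(x,y) = \tfrac{\chi(hx)-\chi(hy)}{\pi(x-y)}$, which is in fact smooth: by the fundamental theorem of calculus,
$$K(x,y) = \frac{h}{\pi}\int_0^1 \chi'(h[sx+(1-s)y])\, ds.$$

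Next I would integrate by parts twice. Applying $[H, \chi(h\cdot)]$ to $\partial_y w$ and integrating by parts in $y$ (boundary terms vanish since $\chi'$ is compactly supported) extracts a factor of $h(1-s)$; then pairing with $\partial_y w(x)$ and integrating by parts in $x$ extracts another factor of $hs$, so that
$$\int [H,\chi(h\cdot)]\,\partial_y w \cdot \partial_y w\, dx = \frac{h^3}{\pi}\int_0^1 s(1-s)\iint \chi'''(h[sx+(1-s)y])\, w(x) w(y)\, dx\, dy\, ds.$$
For fixed $s \in (0,1)$, the change of variables $(x,y) \mapsto (u,v) = (sx+(1-s)y,\, x)$ has Jacobian $1-s$; Cauchy--Schwarz in $v$ at each $u$ then gives
$$\Big|\iint \chi'''(h[sx+(1-s)y])\, w(x) w(y)\, dx\, dy\Big| \lesssim \frac{\|\chi'''\|_{L^1}}{h\sqrt{s(1-s)}}\, \|w\|_{L_y^2}^2.$$
Integrating against $s(1-s)\, ds$ uses $\int_0^1 \sqrt{s(1-s)}\, ds < \infty$, and the powers of $h$ combine as $h^3 \cdot h^{-1} = h^2$, delivering the claim $|\mathcal{I}| \lesssim h^2 \|w\|_{L_y^2}^2$.

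The main obstacle is finding this specific route: naive integration by parts within $\mathcal{I}$ only yields tautological identities of the form $\mathcal{I} = \mathcal{I}$, and the $L^2$-operator norm of $[H, \chi(h\cdot)]$ is $O(1)$ (since $\|\chi(h\cdot)\|_{\mathrm{BMO}}$ is independent of $h$ by scale invariance of BMO), which would miss the $h^2$ gain entirely. Fully expanding the commutator kernel and integrating both derivatives against $\chi$ via two successive IBPs is what produces the needed cancellation.
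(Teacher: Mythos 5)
Your proof is correct, and it rests on the same core identity as the paper's: after your symmetrization $\mathcal{I}=-\tfrac12\int \partial_y w\,[H,\chi(h\cdot)]\,\partial_y w$ and two integrations by parts, the kernel you produce, $\partial_x\partial_y\bigl(\tfrac{\chi(hx)-\chi(hy)}{\pi(x-y)}\bigr)$, is exactly the Kenig--Martel kernel \eqref{E:co23} that the paper imports from \cite{KM} in \eqref{E:co22}. Where you diverge is in how the resulting bilinear form is estimated. The paper first rescales to $h=1$, then proves the pointwise bound $|K|\lesssim 1$ by Taylor expansion and the mean value theorem, checks integrability of $K$ in each variable separately using the explicit formula \eqref{E:co23} for $|y-y'|\ge 1$ together with the compact support of $\chi$, and concludes by the Schur test. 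You instead keep $h$ explicit, represent the kernel through the fundamental theorem of calculus as $\tfrac{h^3}{\pi}\int_0^1 s(1-s)\,\chi'''(h[sx+(1-s)y])\,ds$, and for each fixed $s$ apply a linear change of variables plus Cauchy--Schwarz, with the integrable singularity $1/\sqrt{s(1-s)}$ absorbed at the end. Your version makes the bookkeeping of the two factors of $h$ completely transparent (three derivatives landing on $\chi(h\cdot)$ give $h^3$, against $h^{-1}$ from $\|\chi'''(h\cdot)\|_{L^1}$) and only uses $\chi'''\in L^1$ at the final step, whereas the paper's Schur-test route leans on the far-field decay of the difference-quotient kernel; both are clean, and neither has a gap. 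The only caveat is the implicit a priori regularity needed to justify the integrations by parts and Fubini, which is handled by density exactly as in the paper.
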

\begin{proof}
By scaling, it suffices to take $h=1$.
We follow the beginning of the proof of Lemma 3 on p. 916 of Kenig \& Martel \cite{KM}, where it is observed that
\begin{equation}
\label{E:co22}
\int_y  \chi(hy)  \cdot  H \partial_y w \cdot  \partial_y w \, dy  = \iint_{y,y'} K(y,y') w(y) w(y') \, dy \, dy'
\end{equation}
where 
\begin{align}
\notag K(y,y') &=  - \frac{\partial^2}{\partial y \partial y'} \left( \frac{ \chi(y) - \chi(y')}{y-y'} \right) \\
\label{E:co23} & = \frac{ 2(\chi(y)-\chi(y')) - (\chi'(y)+\chi'(y'))(y-y')}{(y-y')^3}
\end{align}
Using Taylor formulas to third order for $\chi(y)$ and $\chi(y')$, we obtain that for each $y$, $y'$, there exists $y_1$, $y_2$ between $y$ and $y'$ such that
$$K(y,y') = \frac12 \frac{ \chi''(y')-\chi''(y)}{y-y'} + \frac16 (\chi'''(y_1)+\chi'''(y_2))$$
By the mean-value theorem, there exists $y_3$ between $y$ and $y'$ such that
$$K(y,y') =  \frac16 \chi'''(y_1) + \frac16\chi'''(y_2)+ \frac12 \chi'''(y_3) $$
and hence 
\begin{equation}
\label{E:co24}
|K(y,y')| \lesssim 1
\end{equation} 
for all $y,y'\in \mathbb{R}$.  

By decomposition into $|y-y'|\leq 1$ (in which case we appeal to \eqref{E:co24}) and $|y-y'|\geq 1$ (in which case we appeal to \eqref{E:co23}), we obtain
$$\| K\|_{L_y^\infty L_{y'}^1} \lesssim 1 \,, \qquad  \|K \|_{L_{y'}^\infty L_y^1} \lesssim 1$$
We complete the proof by estimating the right side of \eqref{E:co22} using the Schur test to obtain the bound
$$\| K\|_{L_y^\infty L_{y'}^1}^{1/2} \|K \|_{L_{y'}^\infty L_y^1}^{1/2} \|w\|_{L^2}^2$$

\end{proof}

\section{Energy estimate}
\label{S:energybd}

Below in Lemma \ref{L:en}, we state and prove the spectral lower bound on $\mathcal{L}$, following ideas of Weinstein \cite{Wei} and Fr\"ohlich et. al. \cite{FGJS}, and using the explicit spectral resolution of $\mathcal{L}$ provided in the Appendix of Bennett et. al \cite{BBSSB} quoted above as Prop. \ref{P:Lspectral}.  Lemma \ref{L:en} will be needed in the proof of Lemma \ref{L:energycontrol}.

\begin{lemma}[energy bound]
\label{L:en}
There exists $\kappa_1>1$ such that the following holds.  If, for some  $\frac12 \leq c \leq 2$, $w$ satisfies $\la w(y), cQ(cy) \ra =0$ and $\la w(y), cyQ(cy)\ra=0$, then 
$$\kappa_1^{-1} \|w \|_{H^{1/2}}^2 \leq \la \mathcal{L}_c w, w \ra  \leq \kappa_1 \|w\|_{H^{1/2}}^2$$
\end{lemma}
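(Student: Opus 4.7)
The plan is to rescale to $c=1$ and prove spectral coercivity on each parity sector. Setting $v(z) = w(z/c)$, the scale invariance of $\|D^{1/2}\cdot\|_{L^2}^2$ in one dimension together with a change of variables yields $\langle \mathcal{L}_c w, w\rangle = \langle \mathcal{L}v, v\rangle$ and converts the orthogonality conditions to $\langle v, Q\rangle = 0$ and $\langle v, zQ\rangle = 0$; since $c\in[\tfrac12, 2]$, $\|w\|_{H^{1/2}}$ and $\|v\|_{H^{1/2}}$ are comparable up to absolute constants. The upper bound is immediate from $\langle \mathcal{L}v, v\rangle = \|v\|_{H^{1/2}}^2 - \int Qv^2 \leq \|v\|_{H^{1/2}}^2$ since $Q\geq 0$.

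For the lower bound I would split $v = v_e + v_o$ by parity: $H$ and $\partial_x$ each reverse parity so $\mathcal{L}$ preserves it, while $Q$ and $zQ$ have opposite parity, so the constraints $\langle v_e, Q\rangle=0$, $\langle v_o, zQ\rangle=0$ decouple. On the odd sector Prop.~\ref{P:Lspectral} forces $\mathrm{spec}(\mathcal{L}|_{\mathrm{odd}}) = \{0\}\cup[1,\infty)$ with kernel spanned by $Q'$. Integration by parts gives $\langle Q', zQ\rangle = -\tfrac12\|Q\|_{L^2}^2 = -4\pi \neq 0$, so writing $v_o = \beta Q' + v_o^\perp$ with $v_o^\perp\perp Q'$ in $L^2$, the constraint forces $|\beta|\lesssim \|v_o^\perp\|_{L^2}$ and hence $\langle \mathcal{L}v_o, v_o\rangle = \langle \mathcal{L}v_o^\perp, v_o^\perp\rangle \geq \|v_o^\perp\|_{L^2}^2 \gtrsim \|v_o\|_{L^2}^2$.

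The even sector is the crux and calls for a Weinstein variational refinement. On $L^2_{\mathrm{even}}$, $\mathcal{L}$ has a single negative eigenvalue $\lambda_-$ (with eigenfunction $e_-$) and spectrum otherwise $\geq \lambda_+>0$. The essential input is the slope identity from \eqref{E:D15}: $\mathcal{L}(Q+xQ') = -Q$, and since $\ker\mathcal{L}=\mathbb{R}Q'$ is odd, $\mathcal{L}^{-1}$ is well defined on $L^2_{\mathrm{even}}$, giving $\langle \mathcal{L}^{-1}Q, Q\rangle = -\langle Q+xQ', Q\rangle = -\tfrac12\|Q\|_{L^2}^2 < 0$. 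A Lagrange-multiplier analysis reduces $\inf\{\langle \mathcal{L}v_e, v_e\rangle : \|v_e\|_{L^2}=1,\ \langle v_e, Q\rangle=0\}$ to the smallest real zero of $f(\lambda) := \langle (\mathcal{L}-\lambda)^{-1}Q, Q\rangle$; by the spectral theorem $f$ is strictly monotone between poles, $f(0) = -4\pi < 0$, and $f(\lambda)\to+\infty$ as $\lambda\to\lambda_+^-$ (the residue at $\lambda_+$, namely $\langle Q, e_+\rangle^2/\|e_+\|_{L^2}^2$, is positive as verified directly from $e_+ = 2Q + \tfrac{1-\sqrt{5}}{2}Q^2$ and \eqref{E:D4}). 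Hence a unique $\lambda_e^*\in(0,\lambda_+)$ satisfies $f(\lambda_e^*)=0$, yielding $\langle \mathcal{L}v_e, v_e\rangle \geq \lambda_e^* \|v_e\|_{L^2}^2$.

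Combining the two sectors produces $L^2$-coercivity $\langle \mathcal{L}v, v\rangle \geq c_0\|v\|_{L^2}^2$. To upgrade to $H^{1/2}$, I would take a convex combination with the identity $\langle \mathcal{L}v, v\rangle \geq \|v\|_{H^{1/2}}^2 - \|Q\|_{L^\infty}\|v\|_{L^2}^2$; choosing weights appropriately absorbs the negative $-4\|v\|^2$ term and gives $\langle \mathcal{L}v, v\rangle \geq \tfrac{c_0}{c_0+4}\|v\|_{H^{1/2}}^2$. Unwinding the rescaling delivers the stated two-sided bound with an absolute $\kappa_1$. The main obstacle is the Weinstein step on the even sector: a crude spectral decomposition combined with Cauchy--Schwarz on the orthogonality does not obviously beat the negative-eigenvalue contribution, and it is precisely the monotonicity of $f$ (via the slope condition $\langle \mathcal{L}^{-1}Q, Q\rangle < 0$) that forces positivity.
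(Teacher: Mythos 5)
Your proposal is correct, and its engine is the same Weinstein mechanism the paper uses: the resolvent function $f(\lambda)=\la(\mathcal{L}-\lambda)^{-1}Q,Q\ra$ is strictly increasing between poles, and the slope identity $\mathcal{L}(Q+xQ')=-Q$ gives $f(0)=-\tfrac12\|Q\|_{L^2}^2<0$, which is what defeats the negative eigenvalue. Where you genuinely diverge from the paper is in how the second constraint $\la w,(x-a)Q_{a,c}\ra=0$ enters. The paper runs a nested minimization on the full space: it first shows $\inf\{\la\mathcal{L}w,w\ra:\la w,Q\ra=0,\|w\|_{L^2}=1\}=0$ (the infimum is exactly zero, realized by $Q'$), and then shows that imposing the additional $xQ$ constraint pushes the infimum strictly positive by pairing the Euler--Lagrange equation $\mathcal{L}f_0=\mu f_0+\nu Q+\omega xQ$ with $Q'$ to kill the multiplier $\omega$. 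You instead decouple the two constraints by parity: the $xQ$ condition lives entirely on the odd sector, where it controls the $Q'$ component and the rest of the odd spectrum is $\geq 1$, while the $Q$ condition lives on the even sector, where the Weinstein analysis applies with no kernel present. Your route requires the extra spectral input that $e_\pm$ are even (available from the derivation preceding Prop.~\ref{P:Lspectral}) but buys a cleaner quantitative split, a strictly positive even-sector infimum $\lambda_e^*$, and an explicit upgrade from $L^2$- to $H^{1/2}$-coercivity that the paper compresses into the phrase ``by elliptic regularity.'' Both arguments share the same (standard, unaddressed) gap of assuming the constrained infima are attained by minimizers satisfying the Euler--Lagrange equations; since the paper does exactly the same, this is not a defect relative to its proof.
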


\begin{proof}
We treat the case $c=1$.  By elliptic regularity, it suffices to prove that
\begin{equation}
\label{E:lowerbd-1}
 \inf_{ \substack{ \la w, Q \ra=0 \\ \la w, xQ \ra=0 \\ \|w\|_{L^2}=1}} \la \mathcal{L}w, w \ra > 0
 \end{equation}
To prove \eqref{E:lowerbd-1}, it suffices to prove
\begin{equation}
\label{E:lowerbd-2}
\inf_{ \substack{ \la w, Q \ra=0 \\ \|w\|_{L^2}=1}} \la \mathcal{L}w, w \ra =0
\end{equation}
Indeed, suppose that \eqref{E:lowerbd-2} holds.  Then we know that the inf in \eqref{E:lowerbd-1} is $\geq 0$.  Thus, we can assume by contradiction that it is $=0$.  A minimizer $f_0$ satisfies the Euler-Lagrange equation
\begin{equation}
\label{E:lowerbd-3}
 \mathcal{L} f_0 = \mu f_0 + \nu Q + \omega xQ
 \end{equation}
Pairing \eqref{E:lowerbd-3} with $f_0$ yields $\mu=0$.  Pairing \eqref{E:lowerbd-3} with $Q'$ yields $\omega =0$.  Then $\mathcal{L}f_0 = \nu Q$.  But $Q$ is orthogonal to the kernel of $\mathcal{L}$, and hence there is a unique solution given by $f_0 = \nu \mathcal{L}^{-1}Q$.  We know that $\mathcal{L}(Q+xQ') = -Q$, so $f_0 = -\nu (Q+xQ')$.  However, the property $\la f_0, Q\ra =0$ yields that $\nu=0$.  Thus, $f_0=0$, which contradicts that $\|f_0\|_{L^2}=1$.  This concludes the proof that \eqref{E:lowerbd-2} implies \eqref{E:lowerbd-1}.

It remains to prove \eqref{E:lowerbd-1}.  Let $\alpha$ be the stated infimum, so that we aim to show that $\alpha =0$.  Since $w=\frac{Q'}{\|Q'\|_{L^2}}$ satisfies $\la w, Q \ra =0$ and $\|w\|_{L^2}=1$, we know that $\alpha \leq 0$.  We assume by contradiction that $\alpha<0$.     A minimizer $w_0$ satisfies the Euler-Lagrange equation
\begin{equation}
\label{E:lowerbd-4}
\mathcal{L}w_0 = \mu w_0 + \nu Q
\end{equation}
Pairing \eqref{E:lowerbd-4} with $w_0$ yields $\alpha = \mu$, we can now rewrite \eqref{E:lowerbd-4} as
\begin{equation}
\label{E:lowerbd-5}
\mathcal{L}w_0 = \alpha w_0 + \nu Q
\end{equation}
We know that $\mathcal{L}$ has one negative eigenvalue $\lambda_- = -\frac12-\frac12\sqrt{5}$ with corresponding eigenfunction $e_- = 2Q + \frac12(1+\sqrt{5})Q^2$.  Since 
$$\lambda_- = \inf_{\|w\|_{L^2}=1} \la \mathcal{L}w, w\ra$$
it follows that $\alpha \geq \lambda_-$.   If $\alpha=\lambda_-$, then pairing \eqref{E:lowerbd-5} with $e_-$ yields $0= \nu \la Q,e_-\ra$.  From the formula for $e_-$, we conclude that $\nu=0$, and so $w_0 = \frac{e_-}{\|e_-\|_{L^2}}$.  This contradicts that $\la w_0,Q\ra =0$.    Therefore
$$\lambda_- < \alpha<0$$
and we know that $\mathcal{L}-\alpha$ is invertible.  Returning to \eqref{E:lowerbd-5}, we can write
\begin{equation}
\label{E:lowerbd-7}
w_0 = \nu (\mathcal{L}-\alpha)^{-1}Q
\end{equation}
Pairing with $Q$ we obtain
\begin{equation}
\label{E:lowerbd-6}
0 = \nu \la (\mathcal{L}-\alpha)^{-1}Q,Q\ra 
\end{equation}
Consider the function
$$q(\lambda) =  \la (\mathcal{L}-\alpha)^{-1}Q,Q\ra$$
Since $q'(\lambda) = \|(\mathcal{L}-\alpha)^{-1}Q\|^2 >0$, the function $q(\lambda)$ is increasing.  Moreover $q(0) = \la \mathcal{L}^{-1}Q,Q\ra = -\la Q+xQ', Q\ra = - \frac12 \|Q\|^2 <0$.  Consequently  $q(\alpha)<0$.   Returning to \eqref{E:lowerbd-6}, we conclude that $\nu=0$.  Then \eqref{E:lowerbd-7} becomes $w_0=0$, contradicting that $\|w_0\|_{L^2}=1$.  

\end{proof}

\section{Setup}
\label{S:setup}

Theorem \ref{T:main} will follow from the following Prop. \ref{P:main} below. 

\begin{proposition} 
\label{P:main}
There exists a constant $\kappa >1$, $\mu>0$, and $0<h_0\ll 1$ such that the following holds.  Let $0< h \leq h_0$ and suppose initially
$$\| u_0(x) - Q_{0,1}(x) \|_{H_x^{1/2}} \leq h^{3/2}$$
Let $u$ solve (pBO) with initial condition $u_0$.  Then there exist $C^1$ parameters $(a(t),c(t))$ such that
\begin{equation}
\label{E:control'}
 \| u(x,t) - Q_{a(t),c(t)}(x) \|_{H_x^{1/2}} \leq \kappa h^{3/2}e^{\mu ht}
\end{equation}
for $0\leq t \leq T_0=h^{-1}\min( \frac14\mu^{-1} \ln h^{-1}, S_0)$ and 
  \begin{equation}
\label{E:ODE12}
\begin{aligned}
&|\dot { c}  - chW'( ha) | \lesssim \kappa^2 h^3 e^{2\mu ht}\\
&|\dot { a} -  c+ W( ha) +\frac12 c^{-2} h^2 W''(ha)| \lesssim \kappa^2 h^3 e^{2\mu ht}
\end{aligned}
\end{equation}
\end{proposition}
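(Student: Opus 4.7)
My plan is to prove Proposition \ref{P:main} by the standard modulation argument for soliton perturbations, coupled with a bootstrap on the Lyapunov functional $L_c$ introduced in \eqref{E:Lyapunov}. First, I decompose $u(x,t) = Q_{a(t),c(t)}(x) + \eta(x,t)$ with $(a(t),c(t))$ chosen so that $\eta$ satisfies the orthogonality conditions $\la \eta, Q_{a,c}\ra = 0$ and $\la \eta, (x-a)Q_{a,c}\ra = 0$, which, after translation and rescaling $w(y) = \eta(y+a)$, match the hypotheses of Lemma \ref{L:en}. For initial data $h^{3/2}$-close to $Q_{0,1}$ in $H^{1/2}$, the implicit function theorem provides such $(a(t),c(t))$ uniquely near $(0,1)$, $C^1$ in $t$ as long as $\eta$ stays small and $c\in[\tfrac12,2]$, with $|a(0)|+|c(0)-1|+\|\eta(0)\|_{H^{1/2}} \lesssim h^{3/2}$.

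The proof then consists of two coupled estimates. For the \emph{ODE control} (Lemma \ref{L:ODEcontrol}), I differentiate the two orthogonality conditions in $t$ and substitute $\partial_t \eta = \partial_t u - \dot a\, \partial_a Q_{a,c} - \dot c\, \partial_c Q_{a,c}$ with $\partial_t u$ given by (pBO). This produces a $2\times 2$ linear system for $(\dot a, \dot c)$ whose matrix is a perturbation of the one computed in \S\ref{S:heuristics}, and whose leading right-hand side reproduces $\partial_a E(Q_{a,c})$ and $\partial_c E(Q_{a,c})$; the errors come from nonlinear-in-$\eta$ contributions of size $\|\eta\|_{H^{1/2}}^2$ and from the Taylor expansion of $W(ha+h(x-a))$ around $ha$. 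Because $Q$ decays only like $x^{-2}$, a naive Taylor expansion loses absolute convergence after a few orders, and the troublesome remainder (``Term III$'$'' of the introduction) is handled by splitting space into $|x-a|<h^{-1}$ (Taylor to fourth order, with integrals absolutely convergent up to a logarithmic loss) and $|x-a|>h^{-1}$ (controlled directly by the pointwise decay of $Q$ and its derivatives).

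For the \emph{remainder control} (Lemma \ref{L:energycontrol}), I compute $\tfrac{d}{dt}L_c(u(t))$ under the assumption \eqref{E:ODE12}. Since $L_c$ is a Lyapunov functional for the unperturbed (BO) flow, its time derivative reduces to (i) contributions of the potential $V=W(hx)$, (ii) a piece proportional to $\dot c$ minus its heuristic value, and (iii) $\partial_a Q_{a,c}, \partial_c Q_{a,c}$ pieces paired against $\mathcal{L}_{a,c}\eta$. Taylor-expanding $V$ in $h(x-a)$ around $ha$, the zeroth and first-order pieces are annihilated by the orthogonality conditions, producing a gain of $h^2$ on the quadratic-in-$\eta$ part; the third-order remainder is absorbed via the pointwise bound \eqref{E:q-bound} referenced in the introduction, and the cubic-in-$\eta$ term is dominated using the smallness of $\|\eta\|_{H^{1/2}}$. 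The Hilbert-transform contributions arising from $[V,H\partial_x]$ and its variants are precisely those handled by Lemma \ref{L:co1} and Lemma \ref{L:co2}. The net estimate
$$\Bigl|\tfrac{d}{dt}L_c(u(t))\Bigr| \lesssim h L_c(u(t)) + h^4,$$
combined with the coercivity $L_c(u) \geq \kappa_1^{-1}\|\eta\|_{H^{1/2}}^2 - C\|\eta\|_{H^{1/2}}^3$ supplied by Lemma \ref{L:en} and the Gronwall-type Lemma \ref{L:gronwall}, yields $L_c(u(t)) \lesssim h^3 e^{2\mu ht}$, and hence \eqref{E:control'}.

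Finally, the two estimates are combined in a continuity/bootstrap argument on $[0,T_0]$: on any subinterval where \eqref{E:control'} holds with $2\kappa$ in place of $\kappa$, Lemma \ref{L:ODEcontrol} delivers \eqref{E:ODE12}, which then feeds Lemma \ref{L:energycontrol} to recover \eqref{E:control'} with the original constant $\kappa$, allowing the interval of validity to be extended up to $T_0$. The main obstacle, already flagged after Theorem \ref{T:main}, is the term of size $hL_c$ in the Lyapunov derivative: it is unavoidable because $W$ has no symmetry that would kill the linear-in-$\eta$ response to the potential, and it is exactly what forces both the growth factor $e^{\mu ht}$ in \eqref{E:control'} and the Ehrenfest cutoff $T_0 = h^{-1}\min(\tfrac14 \mu^{-1}\log h^{-1}, S_0)$. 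A secondary technical difficulty is controlling the Taylor remainders of $W$ against the slowly-decaying soliton profile, which forces truncation at low order and necessitates the inner/outer spatial split described above.
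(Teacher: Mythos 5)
Your proposal is correct and follows essentially the same route as the paper: the same modulation decomposition with orthogonality conditions, the same two coupled lemmas (ODE control via differentiating the orthogonality conditions with the inner/outer spatial split for the Taylor remainders, and remainder control via the time derivative of the Lyapunov functional with Lemmas \ref{L:co1}--\ref{L:co2} and the coercivity of Lemma \ref{L:en}), closed by the same continuity/bootstrap argument. One small mislabeling: Lemma \ref{L:gronwall} is used in the paper only to pass from Proposition \ref{P:main} to Theorem \ref{T:main} (comparing $(A,C)$ with the reference trajectory), whereas the integration of the Lyapunov differential inequality inside the remainder-control step is a direct time integration whose $\mu^{-1}$ gain is what lets the bootstrap constant improve.
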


\begin{proof}[Proof that Prop. \ref{P:main} implies Theorem \ref{T:main}]
In the conclusion of Prop. \ref{P:main}, we are given $(a(t),c(t))$, from which we can define $(A(s),C(s))$ by $a(t)=h^{-1}A(ht)$ and $c(t) = C(ht)$.  Then $(A(s),C(s))$ satisfy
\begin{equation}
\label{E:ODE11}
\begin{aligned}
&|\dot { C}  - CW'( A)| \leq \kappa^2 h^2 e^{2\mu S}\\
&|\dot { A} -  C+ W( A) +\frac12 C^{-2} h^2 W''(A)| \leq \kappa^2 h^3 e^{2\mu S}
\end{aligned}
\end{equation}
on $0\leq s \leq  \min(\frac14\mu^{-1} \ln h^{-1},S_0)$.  Recall that $(\bar A(s), \bar C(s))$ were defined to solve \eqref{E:ref-traj} with $(\bar A(0),\bar C(0))=(0,1)$.    Now apply the Lemma \ref{L:gronwall} on ODE perturbation to conclude that
$| A - \bar A| \lesssim h^2 e^{2\mu s}$ and $|C - \bar C| \lesssim h^2e^{2\mu s}$, and thus $|a-\bar a| \lesssim h e^{2\mu hs}$ and $|c-\bar c| \lesssim h^2 e^{2\mu hs}$.   Thus Theorem \ref{T:main} follows.
\end{proof}

In the rest of the paper, we will prove Prop. \ref{P:main}.
Define the remainder $\eta$ according to 
\begin{equation}
\label{E:decomp}
u = Q_{a,c} + \eta
\end{equation}
imposing orthogonality conditions
\begin{equation}
\label{E:orth}
\la \eta, Q_{a,c} \ra =0 \,, \quad \la \eta, (x-a) Q_{a,c} \ra = 0
\end{equation}
An implicit function theorem argument shows that there exists a unique choice of $(a,c)$ so that these orthogonality conditions hold.  This is the \emph{definition} of the parameters $(a(t),c(t))$ and of the remainder $\eta$.

Starting with $\partial_t u = JE'(u)$, we substitute \eqref{E:decomp} to obtain
$$\partial_t (Q_{a,c} + \eta) = J E'( Q_{a,c} + \eta)$$
Using expansions
\begin{itemize}
\item $\partial_t Q_{a,c} = \dot a \partial_a Q_{a,c} + \dot c \partial_c Q_{a,c}$
\item $E'(u) = -H\partial_x u - \frac12 u^2 + Vu$
\item $E''(u) = -H\partial_x  - u +V$
\end{itemize}
we obtain the \emph{equation for the remainder}
\begin{equation}
\label{E:eta}
\partial_t \eta = - \dot a \partial_a Q_{a,c} - \dot c \partial_c Q_{a,c} + J E'(Q_{a,c}) + JE''(Q_{a,c}) \eta  - \frac12 \partial_x (\eta^2)
\end{equation}
The soliton part on the right side is simplified as
\begin{align*}
J E'(Q_{a,c}) &= \partial_x (- H\partial_x Q_{a,c} - \frac12Q_{a,c}^2 + W(hx) Q_{a,c}) \\
&= \partial_x ( -c Q_{a,c} + W(hx) Q_{a,c})
\end{align*}

The proof of Prop. \ref{P:main} is completed by bootstrapping Lemmas \ref{L:ODEcontrol} and \ref{L:energycontrol} in the next two sections.

\section{ODE control assuming remainder control}
\label{S:ODEcontrol}

\begin{lemma}[ODE control]
\label{L:ODEcontrol}
For each $\kappa \geq 1$ and for each $\mu>0$, there exists $0<h_0\ll 1$ such that the following holds.  Let $0< h \leq h_0$ and $T$ be any time so that $0\leq T \leq T_0 \defeq \min(\frac14\mu^{-1} \log h^{-1}, S_0)$.  If $\eta$ solves \eqref{E:eta} and $(a,c)$ satisfy, for all $0 \leq t\leq T$, the orthogonality conditions \eqref{E:orth} and we further assume 
\begin{equation}
\label{E:BS1}  
\| u(x,t) - Q_{a(t),c(t)}(x) \|_{H_x^{1/2}} \leq \kappa h^{\frac32}e^{\mu ht}
\end{equation}
then the ODE estimates hold ($ht=s$)
\begin{equation}
\label{E:BS2}
\begin{aligned}
&|\dot { c}  - chW'(ha)| \lesssim \kappa^2 h^3 e^{2\mu ht}\\
&|\dot { a} -  c+ W( ha) +\frac12 c^{-2} h^2 W''(ha)| \lesssim \kappa^2 h^3 e^{2\mu ht}
\end{aligned}
\end{equation}
on $0\leq t \leq T$.
\end{lemma}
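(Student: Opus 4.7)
The plan is to differentiate each of the two orthogonality conditions in \eqref{E:orth} in $t$, substitute the remainder equation \eqref{E:eta} for $\partial_t\eta$, and solve the resulting $2\times 2$ linear system for $(\dot a,\dot c)$.  Writing $\alpha=\la\eta,\partial_aQ_{a,c}\ra$, $\beta=\la\eta,\partial_cQ_{a,c}\ra$, $\gamma=\la\eta,(x-a)\partial_aQ_{a,c}\ra$, $\delta=\la\eta,(x-a)\partial_cQ_{a,c}\ra$ (all of size $O(\|\eta\|_{L^2})$), and using $\la\partial_aQ_{a,c},Q_{a,c}\ra=0$, $\la\partial_cQ_{a,c},Q_{a,c}\ra=4\pi$, $\la\partial_aQ_{a,c},(x-a)Q_{a,c}\ra=4\pi c$, $\la\partial_cQ_{a,c},(x-a)Q_{a,c}\ra=0$, this system takes the form
$$
\begin{pmatrix} 4\pi-\beta & -\alpha \\ -\delta & 4\pi c-\gamma \end{pmatrix}\begin{pmatrix}\dot c\\ \dot a\end{pmatrix}=\begin{pmatrix}R_1\\ R_2\end{pmatrix},
$$
with $R_j=\la JE'(Q_{a,c}),\chi_j\ra+\la JE''(Q_{a,c})\eta,\chi_j\ra-\tfrac12\la\partial_x(\eta^2),\chi_j\ra$ for $\chi_1=Q_{a,c}$ and $\chi_2=(x-a)Q_{a,c}$.

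First I would compute the soliton-driving contributions.  Since $JE'(Q_{a,c})=\partial_x(-cQ_{a,c}+W(hx)Q_{a,c})$, integration by parts reduces $\la JE'(Q_{a,c}),Q_{a,c}\ra$ to $\tfrac h2\int W'(hx)Q_{a,c}^2\,dx$; rescaling $y=c(x-a)$ and Taylor-expanding $W'$ about $ha$ (odd orders vanishing by symmetry of $Q^2$) gives the leading value $4\pi chW'(ha)$.  Similarly, $\la JE'(Q_{a,c}),(x-a)Q_{a,c}\ra$ reduces, after two integrations by parts, to $4\pi c^2-\tfrac12\int W(hx)Q_{a,c}^2\,dx+\tfrac h2\int(x-a)W'(hx)Q_{a,c}^2\,dx$ and expands to $4\pi c^2-4\pi cW(ha)+\tfrac{2\pi h^2}{c}W''(ha)$.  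Because $Q_{a,c}(x)\sim(x-a)^{-2}$, the Taylor remainders against $(x-a)Q_{a,c}^2$ diverge at fourth order when integrated globally (this is the ``Term III'\!''\ issue flagged in the introduction), so in the second integral the expansion of $W'$ must be applied only on the inner region $|x-a|<h^{-1}$ (to fourth order, leaving a remainder of size $(h|x-a|)^4$) while on the outer region $|x-a|>h^{-1}$ one uses $\|W'\|_{L^\infty}\int_{|x-a|>h^{-1}}|x-a|Q_{a,c}^2\,dx\lesssim h^2$; after multiplication by $h/2$ both pieces give an $O(h^3)$ error.  I expect this cutoff step to be the main technical obstacle.

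Next I would handle the linear-in-$\eta$ contributions.  Passing the operator to the other side, $\la JE''(Q_{a,c})\eta,Q_{a,c}\ra=-\la\eta,E''(Q_{a,c})Q'_{a,c}\ra$, and using $\mathcal{L}_{a,c}Q'_{a,c}=0$ to simplify $E''(Q_{a,c})Q'_{a,c}=(W(hx)-c)Q'_{a,c}$, this equals $\la\eta,(c-W(hx))Q'_{a,c}\ra$.  Grouping with $-\alpha\dot a=\dot a\la\eta,Q'_{a,c}\ra$ gives the single inner product $\la\eta,(c-W(hx)-\dot a)Q'_{a,c}\ra$; writing $\dot a=c-W(ha)+O(h^2)$, the $x$-independent piece drops out, and only $\la\eta,(W(ha)-W(hx))Q'_{a,c}\ra+O(h^2\|\eta\|_{L^2})$ remains.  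Taylor-expanding $W(hx)$ to first order in $x-a$ and invoking the algebraic identity $(x-a)Q'_{a,c}=\tfrac1{2c}Q_{a,c}^2-2Q_{a,c}$ together with $\la\eta,Q_{a,c}\ra=0$ reduces the leading residual to $-\tfrac{hW'(ha)}{2c}\la\eta,Q_{a,c}^2\ra$.  The crucial step is then the identity $Q_{a,c}^2=2c^2\partial_cQ_{a,c}+2cQ_{a,c}$, which gives $\la\eta,Q_{a,c}^2\ra=2c^2\beta$ and therefore a contribution $-chW'(ha)\beta$ that cancels against the $-\beta\dot c\approx-\beta\cdot chW'(ha)$ on the left-hand side of the first equation.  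The analogous cancellation for the second equation uses $\la\eta,(x-a)Q_{a,c}\ra=0$ and the same identities.  The residual error is $O(h^2\|\eta\|_{L^2})+O(|e_a|\|\eta\|_{L^2})+O(|e_c|\|\eta\|_{L^2})$, where $e_a,e_c$ denote the left-hand sides of \eqref{E:BS2}; a simultaneous bootstrap on both estimates closes since $\|\eta\|_{L^2}\le\kappa h^{3/2}e^{\mu ht}$ and $\kappa h^{1/2}e^{\mu ht}\le 1$ on $t\le T_0$.

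Finally, the quadratic contribution $-\tfrac12\la\partial_x(\eta^2),\chi_j\ra=\tfrac12\la\eta^2,\chi_j'\ra$ is bounded by $\|\eta\|_{L^2}^2\|\chi_j'\|_{L^\infty}\lesssim\|\eta\|_{H^{1/2}}^2\le\kappa^2h^3e^{2\mu ht}$.  The matrix on the left is an $O(\|\eta\|_{L^2})$ perturbation of $\mathrm{diag}(4\pi,4\pi c)$ with $\tfrac12\le c\le 2$, hence invertible by a Neumann series for $h$ small, and inverting and assembling the bounds produces \eqref{E:BS2}.  The main obstacle remains the spatial cutoff step in paragraph two, forced by the slow polynomial decay of $Q$.
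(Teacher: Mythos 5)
Your proposal is correct and follows essentially the same route as the paper: differentiate the two orthogonality conditions, substitute the remainder equation, compute the soliton-driving terms (with the inner region $|x-a|<h^{-1}$ / outer region splitting and higher-order Taylor expansion for the pairing against $(x-a)Q_{a,c}^2$, which is exactly the paper's treatment of Term III'), exploit the cancellation of the linear-in-$\eta$ terms against the $\dot a,\dot c$ coefficients, and invert the resulting $O(\|\eta\|_{L^2})$-perturbation of $\mathrm{diag}(4\pi,4\pi c)$. The only cosmetic difference is that you integrate the quadratic term by parts onto $Q_{a,c}'$ (needing only $\|\eta\|_{L^2}^2$), which is a slightly cleaner bound than the paper's.
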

\begin{proof}
Taking $\partial_t$ of the orthogonality condition $\la \eta, J^{-1} \partial_a Q_{a,c} \ra = - \la \eta, Q_{a,c} \ra =0$, we obtain
$$0 = \la \partial_t \eta, J^{-1} \partial_a Q_{a, c}\ra - \la \eta, \partial_t Q_{a, c}\ra$$
Substituting in the $\eta$ equation \eqref{E:eta} and expanding $\partial_tQ = \dot a \partial_a Q + \dot c \partial_c Q$,
\begin{equation}
\begin{split}
0 
& =  - \dot{a} \la  \partial_a Q_{a, c}, J^{-1} \partial_a Q_{a, c}   \ra - \dot{c} \la  \partial_c Q_{a, c}, J^{-1} \partial_a Q_{a, c}   \ra + \la J E'(Q_{a, c}), J^{-1} \partial_a Q_{a, c}  \ra \\
& + \la J E''(Q_{a, c}) \eta, J^{-1} \partial_a Q_{a, c}   \ra - \frac{1}{2} \la \partial_x (\eta^2), J^{-1} \partial_a Q_{a, c} \ra - \la \eta, \dot{a} \partial_a Q_{a, c}  \ra - \la  \eta, \dot{c} \partial_c Q_{a, c}  \ra \\ 
& =  I + II + III + IV + V + VI + VII \ . \\
\end{split}
\end{equation}
Noticing the skew-adjontness of $J^{-1}$, we obtain $I =0$. We compute $II$ as follows:
\begin{equation}
II = - \dot{c} \la  \partial_c Q_{a, c}, J^{-1} \partial_a Q_{a, c}   \ra  =  \dot{c} \la  \partial_c Q_{a, c},   Q_{a, c}   \ra  = \frac{1}{2} \dot{c} \partial_c \int Q^2_{a, c} dx  = 4 \pi \dot{c} \ . \\
\end{equation}
For $III$, using $E' (Q_{a, c}) = - c Q_{a, c} + W (hx)Q_{a, c}  $ and $\int  (x-a) Q^2_{a, c} (x) dx =0 $, we have
$$
III =  \la J E'(Q_{a, c}), J^{-1} \partial_a Q_{a, c}  \ra =  - \la  E'(Q_{a, c}),  \partial_a Q_{a, c}  \ra  = -  \partial_a E(Q_{a, c})$$
Using that $E(Q_{a, c}) = E_0(Q_{a, c}) + \frac12 \int W(hx) Q_{a, c}(x)^2 \, dx$, and (by \eqref{E:D6}) $E(Q_{a, c}) = -2\pi c^3$, 
$$III = -\frac 12 \partial_a \int W(hx) Q_{a, c}(x)^2 \, dx$$
Distributing $\partial_a$ onto $Q_{a, c}(x)^2$, converting $\partial_a = -\partial_x$, then integrating by parts\footnote{We note that by using the method employed to treat Term III' below, we can obtain an expression with accuracy $O(h^4)$}
\begin{align*}
III &= -\frac12 h\int W'(hx) Q_{a, c}(x)^2 \, dx\\
& = - \frac{1}{2} h \int  (W'(ha) + h W'' (ha) (x-a) + O(h^2)(x-a)^2 ) Q_{a, c}^2 (x) dx  \\
& = - \frac{1}{2} h  W'(ha) \int Q_{a, c}^2 (x) dx + O (h^3)  \\
& = - 4 \pi ch W'(ha)+ O (h^3) \ . 
\end{align*}
where we used \eqref{E:D6} and that $Q_{a, c}(x)$ is even around $x=a$.  For $IV$, we compute
\begin{equation}
\label{E:four} 
\begin{aligned}
IV &= \la J E''(Q_{a, c}) \eta, J^{-1} \partial_a Q_{a, c}   \ra   = - \la E''(Q_{a, c}) \eta,  \partial_a Q_{a, c}   \ra   \\
&= - \la  \eta,   E''(Q_{a, c}) \partial_a Q_{a, c}   \ra  = - \la  \eta,   \partial_a ( E'(Q_{a, c}) )  \ra   
\end{aligned}
\end{equation}
Using $E' (Q_{a, c}) = - c Q_{a, c} + W (hx) Q_{a, c}$, we have by Taylor expansion
\begin{align*}
\partial_a E'(Q_{a, c}) &= - c \partial_a Q_{a, c}(x) + W(hx) \partial_a Q_{a, c}(x) \\
&= (-c+W(ha)) \partial_a Q_{a, c}(x) + hW'(ha) (x-a) \partial_aQ_{a, c}(x) \\
& \qquad + h^2 \nu_{h,a}(x) \,(x-a)^2 \partial_a Q_{a, c}(x) \\
&= (-c+W(ha)) \partial_a Q_{a, c}(x) - hW'(ha) \partial_x [(x-a) Q_{a, c}(x)] \\
& \qquad + hW'(ha) Q_{a, c}(x) + h^2 \nu_{h,a}(x) \,(x-a)^2 \partial_a Q_{a, c}(x)
\end{align*}
where
$$\nu_{h,a}(x) = \int_0^1 W''(ha + h(x-a)\sigma)(1-\sigma) \, d\sigma $$
which satisfies $\|\nu_{h,a} \|_{L_x^\infty} \lesssim 1$ uniformly in $h$ and $a$.   Plugging into \eqref{E:four},
\begin{align*}
IV &= (c-W(ha)) \la \eta, \partial_a Q_{a, c} \ra + hW'(ha) \la \eta, \partial_x[(x-a)Q_{a, c}] \ra - hW'(ha) \la \eta, Q_{a, c} \ra \\
& \qquad - h^2 \la \eta, \nu_{h,a} (x-a)^2\partial_aQ_{a, c} \ra
\end{align*}
Using that $\partial_x[(x-a)Q_{a, c}]= c\partial_c Q_{a, c}(x)$ and $\la \eta, Q_{a, c} \ra =0$, we obtain
\begin{align*}
IV &= (c-W(ha)) \la \eta, \partial_a Q_{a, c} \ra + c hW'(ha) \la \eta, \partial_c Q_{a, c}\ra - h^2 \la \eta, \nu_{h,a} (x-a)^2\partial_aQ_{a, c} \ra
\end{align*}
Using $\|\eta\|_{L_x^2} \leq \kappa h^{3/2} e^{\mu ht}$ (which is \eqref{E:BS1}), $\|\nu_{h,a} \|_{L_x^\infty} \lesssim 1$, $\| (x-a)^2 \partial_aQ_{a, c} \|_{L_x^2} \lesssim 1$, we obtain
\begin{align*}
IV &= (c-W(ha)) \la \eta, \partial_a Q_{a, c} \ra + c hW'(ha) \la \eta, \partial_c Q_{a, c}\ra + O( \kappa h^{7/2} e^{\mu ht} )
\end{align*}
Bringing in VI and VII, we have
\begin{align*}
IV+VI+VII &= (-\dot a + c-W(ha)) \la \eta, \partial_a Q_{a, c} \ra + (-\dot c + c hW'(ha)) \la \eta, \partial_c Q_{a, c}\ra \\
&\qquad + O( \kappa h^{7/2} e^{\mu ht} )
\end{align*}
For V, using that $J^{-1}\partial_aQ_{a, c} = -Q_{a, c}$, $\|Q_{a, c}\|_{L_x^\infty} \leq 4c \leq 8$, and Cauchy-Schwarz,
$$|V| = | \la \eta \eta_x, Q_{a, c} \ra| \leq 8 \| \eta\|_{L_x^2} \|\eta_x \|_{L_x^2}$$
By \eqref{E:BS1},
$$|V| \leq 8 \kappa^2 h^3 e^{2\mu h t} $$

Putting the estimates of all the terms together, we have
\begin{equation}  
\label{E:ODE13}
\begin{aligned}
0 &=  4 \pi ( \dot{c} - ch W' (ha) ) + (-\dot{a} +c - W(ha) ) \la \eta, \partial_a Q_{a, c} \ra \\
& \qquad + (- \dot{c} + ch W' (ha) ) \la \eta, \partial_c Q_{a, c} \ra + O(\kappa^2 e^{2\mu ht} h^3)
\end{aligned}
\end{equation}

On the other hand, we take $\partial_t$ of the orthogonality condition $\la \eta, J^{-1} \partial_c Q_{a,c} \ra  =0$ to obtain
$$0 = \la \partial_t \eta, J^{-1}\partial_c Q_{a, c}\ra + \la \eta, \partial_t [J^{-1}\partial_cQ_{a, c}] \ra $$
Recalling that $\partial_x[ (x-a)Q_{a, c}(x)] = c\partial_c Q_{a, c}(x)$,
\begin{align*}
0 &= \la \partial_t \eta, J^{-1}\partial_c Q_{a, c}\ra + \la \eta, \partial_t [ c^{-1}(x-a)Q_{a, c}] \ra \\
&= \la \partial_t \eta, J^{-1}\partial_c Q_{a, c}\ra -c^{-2}\dot c \la \eta, (x-a)Q_{a, c} \ra  - c^{-1} \dot a \la \eta, Q_{a, c}\ra \\
& \qquad + c^{-1} \dot a \la \eta, (x-a)\partial_a Q_{a, c} \ra + c^{-1} \dot c \la \eta, (x-a)\partial_c Q_{a, c} \ra \\
&= \la \partial_t \eta, J^{-1}\partial_c Q_{a, c}\ra + c^{-1} \dot a \la \eta, (x-a)\partial_a Q_{a, c} \ra + c^{-1} \dot c \la \eta, (x-a)\partial_c Q_{a, c} \ra
\end{align*}
where we have used the orthogonality conditions.  Substituting in the $\eta$ equation \eqref{E:eta}
\begin{equation}
\begin{split}
0 & =  - \dot{a} \la  \partial_a Q_{a, c}, J^{-1} \partial_c Q_{a, c}   \ra - \dot{c} \la  \partial_c Q_{a, c}, J^{-1} \partial_c Q_{a, c}   \ra + \la J E'(Q_{a, c}), J^{-1} \partial_c Q_{a, c}  \ra \\
& \qquad + \la J E''(Q_{a, c}) \eta, J^{-1} \partial_c Q_{a, c}   \ra - \frac{1}{2} \la \partial_x (\eta^2), J^{-1} \partial_c Q_{a, c} \ra + c^{-1} \dot a \la \eta, (x-a)\partial_a Q_{a, c} \ra \\
& \qquad + c^{-1} \dot c \la \eta, (x-a)\partial_c Q_{a, c} \ra \\ 
& =  I' + II' + III' + IV' + V' + VI' +VII' \ . \\
\end{split} 
\end{equation}

Again, from the skew-adjontness of $J^{-1}$, we obtain $II' =0$. For $I'$ we compute:
\begin{equation}
\begin{split}
I' &= - \dot{a} \la  \partial_a Q_{a, c}, J^{-1} \partial_c Q_{a, c}   \ra  =  \dot{a} \la J^{-1} \partial_a Q_{a, c},   \partial_c Q_{a, c}   \ra  \\
&  = - \dot{a} \la   Q_{a, c},    \partial_c Q_{a, c}   \ra = - \frac{1}{2} \dot{a} \partial_c \int Q^2_{a, c} dx  = - 4 \pi \dot{a} \ . \\
\end{split}
\end{equation}
where we have used \eqref{E:D6}.  For $III'$, we observe
$$III' = \la JE'(Q_{a, c}), J^{-1} \partial_c Q_{a, c} \ra = - \la E'(Q_{a, c}), \partial_c Q_{a, c} \ra = - \partial_c E(Q_{a, c}) $$
By \eqref{E:D6}, $E(Q_{a, c}) = -2\pi c^2 + \frac12 \int W(hx) Q_{a, c}(x)^2 \, dx$, and so
$$III' = 4\pi c - \frac12 \int W(hx) \partial_c ( Q_{a, c}(x)^2 ) \, dx$$
Note that
$$\partial_c ( Q_{a, c}^2 ) = 2 Q_{a, c} \, \partial_c Q_{a, c} = 2Q_{a, c} \,  \partial_x[ c^{-1}(x-a) Q_{a, c} ] = 2c^{-1} Q_{a, c}^2 + c^{-1} (x-a) \partial_x [Q_{a, c}^2]$$
Substituting and integrating by parts,
\begin{equation}
\label{E:odestuff1}
\begin{aligned}
III' &= 4\pi c - c^{-1} \int W(hx) Q_{a, c}(x)^2 \, dx + \frac12 c^{-1}\int  \partial_x[  W(hx) (x-a) ] Q_{a, c}(x)^2 \, dx \\
&= 4 \pi c + c^{-1} \int [-\frac12 W(hx) + \frac12 hW'(hx)(x-a)] Q_{a, c}(x)^2 \, dx
\end{aligned}
\end{equation}
Split the integral into $|x-a|< h^{-1}$ and $|x-a|> h^{-1}$.  By the decay of $Q_{a, c}(x)$, the second region contributes only $O(h^3)$.  For the inner region, we use the Taylor expansions with integral remainder
\begin{align*}
W(hx) &= W(ha) + hW'(ha)(x-a)+ \frac12 h^2 W''(ha)(x-a)^2 \\
&\qquad + \frac16 h^3W'''(ha)(x-a)^3 + \frac1{24} h^4 (x-a)^4 \nu_{h,a}^0(x)
\end{align*}
where 
$$\nu_{h,a}^0(x) = 4\int_0^1 W''''(ha+h(x-a)\sigma) (1-\sigma)^3 \, d\sigma$$
and
\begin{align*}
W'(hx) &= W'(ha) + hW''(ha)(x-a)+ \frac12 h^2 W'''(ha)(x-a)^2  + \frac16 h^3 (x-a)^3\nu_{h,a}^1(x)
\end{align*}
where 
$$\nu_{h,a}^1(x) = 3\int_0^1 W'''(ha+h(x-a)\sigma) (1-\sigma)^2 \, d\sigma$$
Combining,
\begin{equation}
\label{E:odestuff2}
\begin{aligned}
\indentalign -\frac12 W(hx) + \frac12 hW'(hx)(x-a) \\
&= -\frac12 W(ha)   + \frac14 h^2 W''(ha) (x-a)^2 \\
&\qquad + \frac16 h^3 W'''(ha)(x-a)^3 + h^4 (x-a)^4 \nu_{h,a}(x)
\end{aligned}
\end{equation}
where
$$\nu_{h,a}(x) = -\frac{1}{48} \nu_{h,a}^0(x) + \frac{1}{12} \nu_{h,a}^1(x)$$
which is uniformly bounded independently of $h$ and $a$.  Plugging \eqref{E:odestuff2} into \eqref{E:odestuff1} truncated to $|x-a|<h^{-1}$, we obtain
\begin{align*}
III' &= 4\pi c - \frac12 c^{-1}W(ha) \int_{|x-a|<h^{-1}} Q_{a, c}(x)^2 \, dx \\ 
& \qquad + \frac12 c^{-1} h W'(ha) \int_{|x-a|<h^{-1}} (x-a) Q_{a, c}(x)^2 \, dx \\
& \qquad +\frac14 c^{-1} h^2W''(ha) \int_{|x-a|<h^{-1}} (x-a)^2 Q_{a, c}(x)^2 \, dx \\
& \qquad + \frac{1}{6} c^{-1} h^3 W'''(ha) \int_{|x-a|<h^{-1}} (x-a)^3 Q_{a, c}(x)^2 \, dx \\
& \qquad + h^4 c^{-1} \int_{|x-a|<h^{-1}} (x-a)^4 \nu_{h, a}(x) Q_{a, c}(x)^2 \, dx + O(h^3)
\end{align*}
Using that $Q_{a, c}(x)^2$ is even around $x=a$, the third and fifth terms drop out.  Also $(x-a)^4Q_{a, c}(x)^2$ is uniformly bounded, and $\nu_{h, a}(x)$ is uniformly bounded, so the integral in the sixth term yields a factor $h^{-1}$ (the length of the domain of integration), and this term becomes another contribution to the $O(h^3)$ error.  Thus we are reduced to 
\begin{align*}
III' &= 4\pi c - \frac12 c^{-1}W(ha) \int_{|x-a|<h^{-1}} Q_{a, c}(x)^2 \, dx \\
& \qquad +\frac14 c^{-1} h^2W''(ha) \int_{|x-a|<h^{-1}} (x-a)^2 Q_{a, c}(x)^2 \, dx + O(h^3)
\end{align*}
Owing to the decay of $Q_{a, c}(x)^2$, the integrals above can be replaced with integrals over all of $\mathbb{R}$ at the expense of $O(h^3)$ error
\begin{align*}
III' &= 4\pi c - \frac12 c^{-1}W(ha) \int Q_{a, c}(x)^2 \, dx \\
& \qquad +\frac34 c^{-1} h^2W''(ha) \int (x-a)^2 Q_{a, c}(x)^2 \, dx + O(h^3)
\end{align*}
Also by \eqref{E:D4}, 
$$\int Q_{a, c}(x)^2 \, dx = c \int Q(x)^2 \,dx = 8\pi c$$ 
and 
$$\int (x-a)^2 Q_{a, c}(x)^2 \, dx = c^{-1}\int x^2 Q(x)^2 \, dx = 8\pi c^{-1}$$   
Substituting,
$$III' = 4\pi c - 4\pi W(ha) + 2\pi h^2 W''(ha) c^{-2} + O(h^3)$$

For $IV'$,
\begin{equation}
\begin{split}
IV'
& = \la J E''(Q_{a, c}) \eta , J^{-1} \partial_c Q_{a, c} \ra   = - \la E''(Q_{a, c}) \eta ,  \partial_c Q_{a, c} \ra \\
& = - \la  \eta ,  E''(Q_{a, c}) ( \partial_c Q_{a, c} ) \ra = - \la \eta , \partial_c (E' (Q_{a, c})) \ra 
\end{split}
\end{equation}
Using that $E'(Q_{a, c}) = -c Q_{a, c} + W(hx) Q_{a, c}$, we obtain
$$IV' = \la \eta, Q_{a, c} \ra + c\la \eta, \partial_c Q_{a, c}\ra  - \la \eta, W(hx) \partial_c Q_{a, c} \ra$$
By the orthogonality	conditions, the first term drops away.  The third term is divided into an inner region $|x-a|<h^{-1}$ and an outer region $|x-a|> h^{-1}$,
$$IV' =  c\la \eta, \partial_c Q_{a, c}\ra  - \la \eta, W(hx) \partial_c Q_{a, c} \ra_{|x-a|<h^{-1}} - \la \eta, W(hx) \partial_c Q_{a, c} \ra_{|x-a|>h^{-1}}$$
For the outer region, we have
$$| \la \eta, W(hx) \partial_c Q_{a, c} \ra_{|x-a|>h^{-1}} | \lesssim \| \eta\|_{L_x^2} \|W\|_{L_x^\infty} \| \partial_c Q_{a, c} \|_{L_{|x-a|>h^{-1}}^2} \lesssim h^{3/2} \|\eta\|_{L_x^2}$$
By \eqref{E:BS1},
$$| \la \eta, W(hx) \partial_c Q_{a, c} \ra_{|x-a|>h^{-1}} |  \lesssim \kappa h^3 e^{\mu ht }$$
and thus we are reduced to
$$IV' =  c\la \eta, \partial_c Q_{a, c}\ra  - \la \eta, W(hx) \partial_c Q_{a, c} \ra_{|x-a|<h^{-1}} +O( \kappa h^3 e^{\mu ht })$$
For the inner region, we use Taylor expansion with integral remainder to second order
$$W(hx) = W(ha) + hW'(ha)(x-a) + \frac12 h^2 \nu_{h, a}(x) (x-a)^2$$
where
$$\nu_{h, a}(x) = 2 \int_0^1 W''(ha + h(x-a)\sigma) (1-\sigma) \, d\sigma$$
Substituting,
\begin{align*}
IV' &= c  \la \eta, \partial_c Q_{a, c}\ra - W(ha) \la \eta, \partial_c Q_{a, c}\ra_{|x-a|<h^{-1}} \\
& \qquad  - hW'(ha) \la \eta, (x-a) \partial_c Q_{a, c} \ra_{|x-a|<h^{-1}}  \\
& \qquad - \frac12 h^2 W''(ha)\la \eta, (x-a)^2 \partial_c Q_{a, c} \ra_{|x-a|<h^{-1}}\\
& \qquad +O( \kappa h^3 e^{\mu ht })
\end{align*}
In the second and third term, the integrals can be extended to all of $\mathbb{R}$ at the expense of additional contribution to the $O( \kappa h^3 e^{\mu ht })$ error.  In the fourth term, we use that $(x-a)^2 \partial_c Q_{a, c} = O(1)$ and apply Cauchy-Schwarz (using that the length of the interval of integration is $h^{-1}$) to obtain
$$|\frac12 h^2 W''(ha)\la \eta, (x-a)^2 \partial_c Q_{a, c} \ra_{|x-a|<h^{-1}}|  \lesssim h^{3/2} \|W''\|_{L_x^\infty} \|\eta\|_{L_x^2} \lesssim \kappa h^3 e^{\mu ht}$$
This yields
$$
IV' = (c -W(ha)) \la \eta, \partial_c Q_{a, c}\ra  - hW'(ha) \la \eta, (x-a) \partial_c Q_{a, c} \ra +O( \kappa h^3 e^{\mu ht })
$$
For VI', note that
$$(x-a)\partial_a Q_{a, c} = -(x-a)\partial_xQ_{a, c} = -\partial_x[ (x-a)Q_{a, c} ] + Q_{a, c} = -c\partial_c Q_{a, c} + Q_{a, c}$$
By the orthogonality conditions,
$$VI' = c^{-1}\dot a \la \eta, (x-a)\partial_a Q_{a, c}\ra = - \dot  a \la \eta, \partial_c Q_{a, c} \ra$$
Thus
\begin{align*}
IV'+VI'+VII' &= (-\dot a + c -W(ha)) \la \eta, \partial_c Q_{a, c}\ra  \\
& \qquad +(c^{-1}\dot c - hW'(ha)) \la \eta, (x-a) \partial_c Q_{a, c} \ra +O( \kappa h^3 e^{\mu ht })
\end{align*}
Finally, we have
$$V' = - \frac{1}{2} \la \partial_x (\eta^2), J^{-1} \partial_c Q_{a, c} \ra = \frac12 \la \eta^2 , \partial_c Q_{a, c} \ra$$
Since  $\partial_c Q_{a, c} $ is bounded,   \eqref{E:BS1} implies
$$|V'| \lesssim \|\eta \|_{L_x^2}^2 \lesssim \kappa^2 h^3 e^{2\mu ht}$$
Substituting,
\begin{equation}
\label{E:ODE14}
\begin{aligned}
0 &= 4\pi (-\dot a + c - W(ha) +\frac12 h^2 W''(ha) c^{-2}) + (-\dot a + c -W(ha)) \la \eta, \partial_c Q_{a, c}\ra  \\
& \qquad +(c^{-1}\dot c - hW'(ha)) \la \eta, (x-a) \partial_c Q_{a, c} \ra +O( \kappa^2 h^3 e^{2\mu ht })
\end{aligned}
\end{equation}
Combining \eqref{E:ODE13} and \eqref{E:ODE14}, and using $\|\eta\|_{L_x^2} \lesssim \kappa h^{3/2} e^{\mu ht}$, we obtain a matrix equation
$$(4 \pi I - A(\eta))
\begin{bmatrix}
\dot c - ch W'(ha) \\ \dot a -c +W(ha) - \frac12 h^2 W''(ha) c^{-2} 
\end{bmatrix} = O(\kappa^2 h^3 e^{2\mu ht})
$$
where
$$A(\eta)=\begin{bmatrix} \la \eta, \partial_c Q_{a, c}\ra & \la \eta, \partial_a Q_{a, c} \ra \\ c^{-1} \la \eta, (x-a) \partial_c Q_{a, c}\ra & - \la \eta, \partial_c Q_{a, c} \ra \end{bmatrix}$$
By \eqref{E:BS1}, $|A(\eta)| \ll 1$, and thus standard inversion completes the proof.
\end{proof}

\section{Remainder control assuming ODE control}
\label{S:remainder}

Taylor expand $W(hx)$ around $x=a$ to obtain
$$W(hx) = W(ha) + hW'(ha)(x-a) + e_2$$
and use
$$\partial_aQ_{a,c} = - \partial_x Q_{a,c} \,, \qquad \partial_c Q_{a,c} = c^{-1} \partial_x[ (x-a) Q_{a,c} ]$$
Substituting into \eqref{E:eta},
$$\partial_t \eta = 
\begin{aligned}[t]
&(\dot a - c + W(ha))\partial_x Q_{a,c} + (-\dot c c^{-1} + hW'(ha))\partial_x[(x-a)Q_{a,c}]  \\
&+ \partial_x(e_2Q_{a,c}) + JE''(Q_{a,c}) \eta  - \frac12 \partial_x (\eta^2)
\end{aligned}
$$
where
$$e_2(x) = W(hx)-W(ha) - hW'(ha)(x-a)$$
We need to recenter the equation.  For this, define $w(y,t)$ by
$$\eta(x,t) = w(y,t) \,, \qquad y = x-a$$
$$\mathcal{L}_c = c - H\partial_y - cQ(cy)$$
Then
$$\partial_t \eta =  - \dot a  \partial_y w  + \partial_t w \,, \qquad E''(Q_{a,c}) \eta = (\mathcal{L}_c - c + V) w$$
Substituting, the equation for $\eta$ converts to the following equation for $w$
$$\partial_t w = 
\begin{aligned}[t]
&(\dot a - c + W(ha)) cQ'(cy) + (-c^{-1}\dot c + hW'(ha)) (cyQ(cy))' \\
&+ \partial_y(e_2 cQ(cy)) + \partial_y\mathcal{L}_c w + (\dot a - c + W(ha)) \partial_y w \\
&+ \partial_y [(W(ha+ hy)-W(ha)) w]  - \frac12 \partial_y (w^2)
\end{aligned}
$$
and now
$$e_2(y) = W(ha+hy)-W(ha)-hW'(ha)y$$

\begin{lemma}[energy remainder control]
\label{L:energycontrol}
For each $\kappa \geq 1$ and each $\mu>0$, there exists $0<h_0\ll 1$ such that the following holds.
Suppose $0 < h \leq h_0$ and $0\leq T \leq T_0 \defeq h^{-1}\min( \frac14\mu^{-1}\log h^{-1}, S_0)$. 
If $\eta$ solves \eqref{E:eta} and $(a,c)$ satisfy, for all $0 \leq t\leq T$, the orthogonality conditions \eqref{E:orth} and we further assume 
\begin{equation}
\label{E:BS3init}
\| u(x,0) - Q_{a(0),c(0)}(x) \|_{H_x^{1/2}} \leq h^{3/2}
\end{equation}
and for all $0 \leq t\leq T$
\begin{equation}
\label{E:BS3}
\| u(x,t) - Q_{a(t),c(t)}(x) \|_{H_x^{1/2}} \leq \kappa h^{3/2}e^{\mu ht}
\end{equation}
then for all $0 \leq t\leq T$
\begin{equation}
\label{E:BS5}
 \| u(x,t) - Q_{a(t),c(t)}(x) \|_{H_x^{1/2}} \leq \alpha h^{3/2}(1+ \kappa \mu^{-1/2} e^{\mu ht})\\
\end{equation}
where the constant $\alpha\geq 1$ in \eqref{E:BS5} is independent of $\kappa$ and $\mu$.  
\end{lemma}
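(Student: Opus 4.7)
The plan is to establish a differential inequality for the Lyapunov functional $L_c(u(t))$ of \eqref{E:Lyapunov} and conclude by Gronwall. Applying Lemma \ref{L:en} to $w(y,t) = \eta(x,t)$, $y = x - a$ (the orthogonality conditions \eqref{E:orth} convert exactly to the two spectral conditions $\la w, cQ(cy)\ra = 0$ and $\la w, cyQ(cy)\ra = 0$ required there), and absorbing the cubic remainder $|\frac16 \int \eta^3| \lesssim \|\eta\|_{H^{1/2}}^3$ by means of the bootstrap \eqref{E:BS3} with $h$ small, one obtains a two-sided comparison $L_c(u(t)) \sim \|\eta(t)\|_{H^{1/2}}^2$ with constants uniform for $\tfrac12 \leq c \leq 2$. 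Hence \eqref{E:BS5} will follow from a bound of the form $L_c(u(t)) \lesssim h^3 + \kappa^2 \mu^{-1} h^3 e^{2\mu ht}$ on $[0,T]$.

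To compute $\partial_t L_c(u)$, write $L_c(u) = cM(u) + E_0(u) - 2\pi c^2$ and exploit the conservation of $M(u)$ and of the full Hamiltonian $E(u) = E_0(u) + \tfrac12 \int V u^2$ under (pBO). Combined with the mass identity $M(u) = 4\pi c + \tfrac12 \|\eta\|_{L^2}^2$ coming from $\la \eta, Q_{a,c}\ra = 0$, this yields
$$\partial_t L_c(u) \;=\; \tfrac12 \dot c\, \|\eta\|_{L^2}^2 \;-\; \tfrac12 \partial_t\!\int V u^2\, dx.$$
The first term is immediately bounded by $h\,L_c(u)$ via $|\dot c| \lesssim h$ (from Lemma \ref{L:ODEcontrol}) together with the coercivity above.

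The second term is expanded as $\partial_t \int V u^2 = -2\int (Vu)_x E'(u)\, dx$ and then treated by substituting $u = Q_{a,c} + \eta$ and Taylor-expanding $V(x) = W(ha + h(x-a))$ around $x = a$. The purely soliton contributions (those independent of $\eta$) are $O(h)$ terms that, after using the ODE relations of Lemma \ref{L:ODEcontrol}, collapse to an irreducible pure source of size $O(h^3)$; here the Taylor remainder is handled by splitting the integration into $|x-a| < h^{-1}$ (where Taylor's formula with remainder is applied directly, as in the proof of Lemma \ref{L:ODEcontrol}) and $|x-a| > h^{-1}$ (where the slow $1/x^2$ decay of $Q_{a,c}$ already supplies an extra $h$-smallness). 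The cross-terms linear in $\eta$ carry a factor $h$ (from $V_x = h W'(hx)$ or from the Taylor correction) and, after integration by parts, decompose into quadratic forms in $\eta$; the delicate pieces pair a localized cutoff with $H\partial_y \eta$ or with $H\partial_y w \cdot \partial_y w$, and are precisely controlled by Lemmas \ref{L:co1} and \ref{L:co2}, yielding a contribution $\lesssim h\, L_c(u)$. The remaining quadratic-in-$\eta$ pieces are $\lesssim h \|\eta\|_{H^{1/2}}^2 \lesssim h\, L_c(u)$, and the cubic $\eta$-pieces are absorbed by \eqref{E:BS3}.

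Collecting everything produces a differential inequality of the form $|\partial_t L_c(u(t))| \leq 2\mu h\, L_c(u(t)) + C h^3(1 + \kappa^2 e^{2\mu ht})$, with $\mu$ chosen large enough to absorb all of the $O(h)$-weighted coefficients in front of $L_c$. Gronwall combined with the initial bound $L_c(u(0)) \lesssim h^3$ coming from \eqref{E:BS3init} then yields $L_c(u(t)) \lesssim h^3 + \kappa^2 \mu^{-1} h^3 e^{2\mu ht}$ on $0 \leq t \leq T_0$, and \eqref{E:BS5} follows by coercivity. The main obstacle is the bookkeeping of the cross-terms that mix the nonlocal Hilbert transform in $E'(u)$ with localized potential cutoffs and with the slowly decaying tails of $Q_{a,c}$: the Taylor expansion of $W$ can only be carried to a degree compatible with the $1/x^2$ decay of $Q$, so the inner/outer region decomposition together with the cutoff Hilbert-transform estimates of Lemmas \ref{L:co1} and \ref{L:co2} are exactly what makes the source terms closeable at the $h^3$ level.
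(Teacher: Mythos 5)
Your overall architecture (Lyapunov functional $L_c(u)$, coercivity from Lemma \ref{L:en}, differential inequality plus Gronwall) is the right one, but the derivation of your key identity rests on a false conservation law, and this is a genuine gap. The mass $M(u)=\frac12\int u^2$ is \emph{not} conserved under (pBO): since $\partial_t u = \partial_x E'(u)$ with $E'(u)=-Hu_x-\frac12u^2+Vu$, one computes $\partial_t M(u) = -\la u_x, Vu\ra = \frac12\int V_x u^2$, whose soliton part is $\frac h2 W'(ha)\int Q_{a,c}^2 + O(h^{5/2}) = 4\pi c\, h W'(ha)+O(h^{5/2})$, i.e.\ genuinely of size $h$. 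The correct identity is therefore
$$\partial_t L_c(u) \;=\; \tfrac12\dot c\,\|\eta\|_{L^2}^2 \;+\; \tfrac{c}{2}\int V_x u^2 \;-\; \tfrac12\,\partial_t\!\int V u^2,$$
and the term you dropped, $\frac{c}{2}\int V_xu^2$, is exactly what cancels the leading $O(h)$ soliton contribution of $-\frac12\partial_t\int Vu^2$ (whose soliton part is $-\frac{c}{2}\int V_xQ_{a,c}^2$, as one checks using $E'(Q_{a,c})=-cQ_{a,c}+VQ_{a,c}$). With your formula the purely soliton contribution does \emph{not} ``collapse to an irreducible source of size $O(h^3)$'': an uncancelled term $\approx -4\pi c^2 hW'(ha)$ survives, and there is no $\dot c$ left in your expression for the ODE relations to absorb it. An $O(h)$ drift in $\partial_t L_c$ integrates to $O(1)$ over $t\sim h^{-1}$, which destroys the required $O(h^3)$ bound on $L_c$; so the proof as written does not close.

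Two further remarks. First, your closing step ``with $\mu$ chosen large enough to absorb all of the $O(h)$-weighted coefficients in front of $L_c$'' is inconsistent with the quantifier structure of the lemma: here $\mu>0$ is \emph{given} (only $h_0$ may depend on $\kappa,\mu$), and $\alpha$ must be independent of $\kappa,\mu$; the freedom to take $\mu$ large is exercised only later, in the proof of Prop.~\ref{P:main}. Second, for comparison, the paper avoids conservation laws entirely: it differentiates $\la\mathcal L_c w,w\ra-\frac13\int w^3$ directly using the recentered equation for $w$, kills the dangerous linear-in-$w$ terms via $\mathcal L_c(cQ'(cy))=0$, $\mathcal L_c(cyQ(cy))'=-c^2Q(cy)$ and the orthogonality conditions, bounds the would-be $hL_c$ terms by $\kappa^2h^4e^{2\mu ht}$ using the bootstrap \eqref{E:BS3}, and gains the essential factor $\mu^{-1}$ by integrating $e^{2\mu ht}$ in time rather than by a Gronwall absorption. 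Your conservation-law route can be repaired (keep the $c\,\partial_tM(u)$ term and exhibit the cancellation explicitly), but as written the mass-conservation claim is the step that fails.
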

Before we prove this, we explain how to select $\kappa\geq 1$ and $\mu>0$ in terms of the given $\alpha>0$ in order to obtain a useful implication, and complete the proof of Proposition \ref{P:main}.  

\begin{proof}[Proof of Prop. \ref{P:main}]
Let  $\kappa = 4 \alpha$ and $\mu = \kappa^2$.  Then the right side of \eqref{E:BS5} becomes
$$\alpha h^{3/2}(1+ \kappa \mu^{-1/2} e^{\mu ht}) = \tfrac14 \kappa h^{3/2} (1 + e^{\mu ht}) \leq \tfrac12 \kappa h^{3/2} e^{\mu ht}$$
With this choice of $\kappa$ and $\mu$, we now let $T$ be the \emph{maximal} time so that \eqref{E:BS3} holds.  Since $\kappa>1$, \eqref{E:BS3init} holds, and $\|\eta (t)\|_{H^{1/2}}$ is continuous in time, we know that $T>0$.   Moreover, by the maximality and continuity of $\|\eta(t)\|_{H^{1/2}}$, if $T<T_0$ then equality holds $\|\eta(T)\|_{H^{1/2}} = \kappa h^{3/2}e^{\mu hT}$.  But Lemma \ref{L:energycontrol} applies for $0\leq t \leq T$, and thus in particular  \eqref{E:BS5} implies $\|\eta(T)\|_{H^{1/2}} \leq \frac12 \kappa h^{3/2}e^{\mu hT}$, a contradiction.   Hence in fact $T=T_0$, and we conclude that \eqref{E:BS3} holds for all $0\leq t \leq T_0$, concluding the proof of Proposition \ref{P:main}.
\end{proof}

\begin{proof}[Proof of Lemma \ref{L:energycontrol}]
From Lemma \ref{L:ODEcontrol}, we know that
\begin{equation} 
\label{E:BS6}
\begin{aligned}
&| - c^{-1} \dot { c}  - h W'( ha) | \lesssim  \kappa^2 h^3 e^{2\mu ht}\\
&|\dot { a} -  c + W( ha)  | \lesssim \kappa^2 h^2 e^{2\mu ht}
\end{aligned}
\end{equation}

Firstly we compute an upper bound on $\la \mathcal{L}_c w,w \ra  $ as follows.
Noting that $\mathcal{L}_c$ is self-adjoint, we have
$$\partial_t \la w , \mathcal{L}_c w \ra  = 2 \la w_t, \mathcal{L}_c w \ra \ .$$
Plugging in the equation for $w$, we have 
\begin{equation}
\label{E:BS31}
\begin{split}   
 \partial_t \la \mathcal{L}_c w , w  \ra 
&  =     2  \int_y \big[  (\dot{a} - c + W(ha)) cQ'(cy) + (-c^{-1} \dot{c} + h W'(ha) ) (cyQ(cy))' \\
&\qquad\qquad\qquad + \partial_y (e_2 cQ(cy))  + \partial_y \mathcal{L}_c w +  (\dot{a} - c + W(ha)) \partial_y w \\
&\qquad\qquad \qquad + \partial_y  [ ( W(ha+hy) - W(ha)) w  ] - \frac{1}{2} \partial_y (w^2)   \big]  \mathcal{L}_c w \, dy  \\
& : =   I + II + III + IV + V + VI + VII \ . \\
\end{split}
\end{equation}
We compute $I$ and $II$, using the self-adjointness of $\mathcal{L}_c$,
\begin{equation} 
\begin{split}
I   &=(\dot{a} - c + W(ha))  \la cQ'(cy),  \mathcal{L}_c w     \ra   \\
&=  (\dot{a} - c + W(ha))  \la  \mathcal{L}_c ( cQ'(cy)),   w    \ra   \\
&= 0 \ ,  
\end{split}
\end{equation}
\begin{equation} 
\begin{split}
II  
&  =   (-c^{-1} \dot{c} + h W'(ha) )    \la (cyQ(cy))' ,  \mathcal{L}_c w     \ra \\
&  =  (-c^{-1} \dot{c} + h W'(ha) )  \la  \mathcal{L}_c (cyQ(cy))',   w    \ra  \\
& =   (-c^{-1} \dot{c} + h W'(ha) )  \la  - c^2 Q(cy),   w   \ra  \\
&= 0 
\end{split}
\end{equation}
by the orthogonality conditions.

For $\text{III}$, note that
\begin{equation}
\label{E:termIII-1}
e_2(y) = e_3(y)+ \frac12 h^2 W''(ha) y^2
\end{equation}
where
$$e_3(y) = W(h(y+a)) - W(ha) - hW'(ha) y - \frac12 h^2W''(ha) y^2$$
which, by the integral form of the remainder in Taylor's expansion, has the form
$$e_3(y) =  h^3 y^3 q(hy)$$
where
$$q(z) = \int_0^1 W'''(zs+ha)) (1-s)^2 \, ds$$
From this expression, we see that $q(z)$ is smooth and by integration by parts, $|q^{(k)}(z)| \lesssim_k \la z \ra^{-k-1}$ for all $k\geq 0$ and hence 
\begin{equation}
\label{E:q-bound}
\| \la z \ra^k q^{(k)}(z) \|_{L^2_z} \lesssim_k  1 \,, \qquad \text{for all } k\geq 0 \,.
\end{equation}

Upon substituting \eqref{E:termIII-1}, we obtain
$$\text{III} = h^2  W''(ha) \la  \mathcal{L}_c\partial_y (y^2Q_c) , w \ra  + 2 \la  \mathcal{L}_c \partial_y(e_3 Q_c) , w \ra $$
Since $y^2Q_c(y) = 4c^{-1} - c^{-2} Q_c(y)$, we have $\mathcal{L}_c \partial_y (y^2Q_c) = 0$, and thus the first term drops out leaving
$$\text{III} =  2 \la  \mathcal{L}_c \partial_y(e_3 Q_c) , w \ra $$
Using that $ e_3 (y) = h^3 y^3 q(hy)$, we obtain
$$\partial_y (e_3Q_c) = 3 h^3 q(hy) \, y^2 Q_c(y) + h^3  \, (hy) q'(hy)  \, y^2 Q_c(y) + h^3 q(hy) \, y^3 (Q_c)'(y)$$
Thus
\begin{align*}
\| \partial_y (e_3Q_c) \|_{L^2_y} 
&\lesssim h^3 \| q(hy) \|_{L^2} \| y^2 Q_c(y) \|_{L^\infty} + h^3 \| (hy) q'(hy) \|_{L^2} \| y^2 Q_c(y) \|_{L^\infty} \\
& \qquad \qquad + h^3 \| q(hy) \|_{L^2} \| y^3 (Q_c)'(y)\|_{L^\infty}
\end{align*}
By rescaling in the $L^2$ norm terms and using \eqref{E:q-bound}
$$\| \partial_y (e_3Q_c) \|_{L^2_y} \lesssim h^{5/2}$$
Moreover, it can be checked that
$$\| H\partial_y^2 (e_3Q_c) \|_{L^2_y} \lesssim \| \partial_y^2 (e_3Q_c) \|_{L^2_y} \lesssim h^{7/2}$$
and consequently
$$\| \mathcal{L}_c \partial_y (e_3Q_c) \|_{L_y^2} \lesssim h^{5/2}$$
Thus, by \eqref{E:BS3},
$$| III | \lesssim \| \mathcal{L}_c \partial_y (e_3Q_c) \|_{L_y^2} \|w\|_{L_y^2}\lesssim \kappa h^4 e^{\mu h t}$$

For $IV$, we notice that $ \la \partial_y  \mathcal{L}_c w, \mathcal{L}_c w \ra =0$.

For $V$, we estimate,
\begin{equation*}
\begin{split}
V   &=    (\dot{a} - c + W(ha))  \la \partial_y w ,  \mathcal{L}_c w  \ra  \\
& =   (\dot{a} - c + W(ha))  ( \la \partial_y w , (- H \partial_y +c  )  w  \ra   +  \la  \partial_y w , - Q_c (y) w \ra  )  \\
& =   (\dot{a} - c + W(ha))    \la  \partial_y w , - Q_c (y) w \ra  
\end{split}
\end{equation*}
where we have used that that the first term contains skew-adjoint operators.  By integration by parts,
$$V  =  \frac12  (\dot{a} - c + W(ha))  \int_y Q'_c (y)  w^2 dy$$
and hence, by \eqref{E:BS3}, \eqref{E:BS6},
$$ | V | \leq   \frac12  |\dot{a} - c + W(ha)|  \|w\|^2_{L^2} \lesssim \kappa^2 h^5 e^{4\mu ht}$$
Using that $\kappa^2 h e^{\mu h t} \leq 1$, this becomes
$$ | V | \leq   \frac12  |\dot{a} - c + W(ha)|  \|w\|^2_{L^2} \lesssim  h^4 e^{2\mu ht}$$
For $VI$, 
\begin{equation}
\begin{split}
VI  
& =    \la  \mathcal{L}_c w , \partial_y  [ ( W(ha+hy) - W(ha)) w  ]   \ra  \\
& =    \la  ( - H \partial_y + c - Q_c ) w , \partial_y  [ ( W(ha+hy) - W(ha)) w  ]   \ra   \ . \\
\end{split}
\end{equation}
We have
\begin{equation}
\begin{split}
&    \la  c  w , \partial_y  [ ( W(ha+hy) - W(ha)) w  ]   \ra   \\
& =  \la  c  w , w \partial_y   ( W(ha+hy) - W(ha))   \ra  +   \la  c  w , \partial_y w     ( W(ha+hy) - W(ha))   \ra  \\
& =  \la  c  w , w \partial_y   ( W(ha+hy) - W(ha))   \ra  + \frac{1}{2}  \la  c  ( W(ha+hy) - W(ha)), \partial_y (w^2)   \ra   \\
& =   \la  c  w , w \partial_y   ( W(ha+hy) - W(ha))   \ra   - \frac{1}{2}  \la  c \partial_y ( W(ha+hy) - W(ha)),  w^2   \ra \\
& \lesssim    h \cdot \|w\|^2_{L^2}   \\
& \lesssim  \kappa^2 h^4 e^{2\mu ht}
\end{split}
\end{equation}
where we used \eqref{E:BS3}.  Also,
\begin{equation}
\begin{split}
&    \la  Q_c  w , \partial_y  [ ( W(ha+hy) - W(ha)) w  ]   \ra   \\
& =   \la  Q_c  w , w \partial_y   ( W(ha+hy) - W(ha))   \ra   +    \la  Q_c  w , \partial_y w     ( W(ha+hy) - W(ha))   \ra   \\
& =    \la  Q_c  w , w \partial_y   ( W(ha+hy) - W(ha))   \ra   + \frac{1}{2}   \la  Q_c  ( W(ha+hy) - W(ha)), \partial_y (w^2)   \ra   \\
& =    \la  Q_c  w , w \partial_y   ( W(ha+hy) - W(ha))   \ra   - \frac{1}{2}   \int_y (W(ha +hy) - W(ha)) Q'_c w^2   \\
& \lesssim     h  \|w\|^2_{L^2}   \\
& \lesssim \kappa^2  h^4 e^{2\mu ht} \\
\end{split}
\end{equation}
In the computation above we made use of the localization effect of $Q_c$. Next,
\begin{equation}
\begin{split}
&  \la  - H \partial_y  w , \partial_y  [ ( W(ha+hy) - W(ha)) w  ]   \ra    \\
& =   \la  - H \partial_y  w , w \partial_y   ( W(ha+hy) - W(ha))   \ra   +    \la  - H \partial_y  w , \partial_y w     ( W(ha+hy) - W(ha))   \ra   \\
& \leq  h  | \la   H \partial_y  w , W'(ha) w  \ra |   +   | \la   H \partial_y  w , \partial_y w     ( W(ha+hy) - W(ha))   \ra  |  \ . \\
\end{split}
\end{equation}
For the first term, we use Lemma \ref{L:co1} with $\chi (hy) = W'(ha) $ to obtain
$$ h  | \la   H \partial_y  w , W'(ha) w  \ra |  \lesssim h \|w\|^2_{H^{1/2}}  \lesssim \kappa^2 h^4e^{2ht} \ . $$
For the second term, we use Lemma \ref{L:co2} with $\chi (hy) = W(ha+hy) - W(ha) $ to obtain
$$  | \la   H \partial_y  w , \partial_y w     ( W(ha+hy) - W(ha))   \ra  |  \lesssim h^2  \|w\|^2_{H^{1/2}}   \lesssim \kappa^2 h^4 e^{2\mu h t}  \ . $$
Hence 
$$ VI \lesssim \kappa^2 h^4 e^{2\mu h t}    \ .  $$
Notice that in \eqref{E:BS31}, we have estimated all terms except $VII$.

Next, we compute $\frac{1}{3}\partial_t \int_y w^3 dy $. We have
\begin{equation}
\begin{split}   \label{E:BS32}
\frac{1}{3}\partial_t \int_y w^3 dy
& =    \int_y  w^2 w_t   dy   \\
&  =     \int_y \big[  (\dot{a} - c + W(ha)) cQ'(cy) + (-c^{-1} \dot{c} + h W'(ha) ) (cyQ(cy))' \\
& + \partial_y (e_2 cQ(cy))  + \partial_y \mathcal{L}_c w +  (\dot{a} - c + W(ha)) \partial_y w \\
&  + \partial_y  [ ( W(ha+hy) - W(ha)) w  ] - \frac{1}{2} \partial_y (w^2)   \big]  w^2 dy   \\
& : =   I' + II' + III' + IV' + V' + VI' + VII'  \ . \\
\end{split}
\end{equation} 
By \eqref{E:BS3} and \eqref{E:BS6}, we estimate
\begin{equation}
I' \lesssim \|w\|^2_{L^2} h^2  \lesssim \kappa^2 h^4 e^{2\mu ht} \ ,
\end{equation}
Similarly $II' \lesssim \kappa^2 h^4 e^{2\mu ht}$, and for $III'$, using that $\|\partial_y (e_2 Q_c) \|_{L_y^\infty} \lesssim h^2$, we obtain $III' \lesssim  \kappa^2 h^4 e^{2\mu ht}$.  Noticing that $\la \partial_yw, w^2 \ra =0$ and $\la \frac{1}{2} \partial_y (w^2), w^2 \ra = 0$, we obtain $V' = VII' =0$. 
For $VI'$, we compute
\begin{equation}
\begin{split}
VI' 
& =      \la  w \partial_y   ( W(ha+hy) - W(ha)),  w  \ra    +      \la   \partial_y w   ( W(ha+hy) - W(ha))  ,  w  \ra   \\
& \lesssim h     \int_y w^3 dy  +      \frac{1}{3} \la W(ha+hy)- W(ha), \partial_y (w^3) \ra  \\
& \lesssim h  \|w\|^3_{L^3}   +    \frac{1}{3} \la \partial_y [ W(ha+hy)- W(ha)] , w^3 \ra  \\
& \lesssim   \kappa^3 h^{11/2} e^{3\mu ht} 
\end{split}
\end{equation}
Using that $h^{1/2} e^{3\mu h t} \lesssim 1$, we conclude
$$VI' \lesssim h^5$$
Notice that in \eqref{E:BS32}, we have estimated all terms except $IV'$.  However, note that $VII$ from \eqref{E:BS31} and $IV'$ from \eqref{E:BS32} combined to give
\begin{equation}
\label{E:BS10}
VII-IV' =0
\end{equation}

Subtracting \eqref{E:BS31} by \eqref{E:BS32}, we appeal to the above estimates and the cancelation \eqref{E:BS10} to conclude that, under the assumptions of \eqref{E:BS3} and \eqref{E:BS6},
\begin{equation}
\label{E:BS11}
\left| \partial_t  \left( \la \mathcal{L}_c w, w\ra - \frac13 \int w^3 \, dy \right) \right| \lesssim \kappa^2 h^4 e^{2\mu ht}
\end{equation}
Integrating in time, we obtain
$$\la \mathcal{L}_c w(t), w(t)\ra \leq 
\begin{aligned}[t]
&\la \mathcal{L}_c w(0), w(0)\ra + \frac13 \int w(t)^3 \, dy - \frac13 \int w(0)^3 \, dy \\
&+ \beta \kappa^2 \mu^{-1}h^3 (e^{2\mu h t} -1)
\end{aligned}$$
for some constant $\beta>0$ independent of $\kappa$ and $\mu$.   We have
$$\| u\|_{L^3}^3 \lesssim \|w\|_{H^1}^3 \lesssim \kappa^3 e^{3\mu ht} h^{9/2} \leq \kappa^2 \mu^{-1} h^3 e^{2\mu h t}$$
since $t$ is bounded so that $\kappa h^{3/2} e^{\mu ht} \leq \mu^{-1}$.  This gives
$$\la \mathcal{L}_c w(t), w(t)\ra \leq \la \mathcal{L}_c w(0), w(0)\ra + (\beta+1) \kappa^2 \mu^{-1}h^3 e^{2\mu h t}
$$
By Lemma \ref{L:en}, there exists $\kappa_1\geq 1$ so that
$$\kappa_1^{-1}\| w(t)\|_{H^{1/2}}^2 \leq \kappa_1 \|w(0)\|_{H^{1/2}}^2 + (\beta+1) \kappa^2 \mu^{-1}h^3 e^{2\mu h t}
$$
By \eqref{E:BS3} evaluated at $t=0$, we obtain
$$\| w(t)\|_{H^{1/2}}^2 \leq \kappa_1^2 h^3 + \kappa_1(\beta+1) \kappa^2 \mu^{-1}h^3 e^{2\mu h t}
$$
from which \eqref{E:BS5} follows.
\end{proof}

\section{ODE perturbation theory}

\begin{lemma}[Gronwall]
\label{L:gronwall}
Suppose $X, \bar X :\mathbb{R} \to \mathbb{R}^d$ solve
\begin{align*}
&\dot X(s) = f(X(s)) + h^2 g(X,s) \\
&\dot{\bar X}(s) = f(\bar X(s))
\end{align*}
with the same initial condition $X(0)=\bar X(0)$,
where $f: \mathbb{R}^d \to \mathbb{R}^d$, $ g: \mathbb{R}^{d+1} \to \mathbb{R}^d$.  Suppose that the $d\times d$ matrix $f'(X)$ is uniformly bounded:  for all $X\in \mathbb{R}^d$, 
$$\| f'(X)\|_{\ell^2} \leq \kappa$$
where $\ell^2$ is the square sum norm on the $d^2$ entries of the matrix.  Then
$$|X(s)-\bar X(s)|^2 \leq  h^4 \int_0^s e^{-(2\kappa+1)(s-s')}|g(X(s'),s')|^2 \, ds'$$
\end{lemma}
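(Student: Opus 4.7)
The plan is to derive a differential inequality for $|X(s) - \bar X(s)|^2$ and then integrate it using an integrating factor, which is the standard Gronwall approach.

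First I would set $Y(s) = X(s) - \bar X(s)$, note that $Y(0) = 0$, and subtract the two ODEs to get
$$\dot Y(s) = \bigl(f(X(s)) - f(\bar X(s))\bigr) + h^2 g(X(s), s).$$
The key move is to linearize the $f$-difference via the fundamental theorem of calculus: writing
$$f(X) - f(\bar X) = M(s)\, Y(s), \qquad M(s) := \int_0^1 f'(\bar X + \tau Y)\, d\tau,$$
one has a matrix-valued function $M(s)$ that inherits the uniform bound $\|M(s)\|_{\ell^2} \leq \kappa$ from the hypothesis on $f'$. This converts the nonlinear error equation into the linear form $\dot Y = M(s) Y + h^2 g(X, s)$.

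Next I would compute
$$\frac{d}{ds}|Y|^2 = 2\langle Y, M(s) Y\rangle + 2 h^2 \langle Y, g(X, s)\rangle,$$
and estimate each term: $|2\langle Y, MY\rangle| \leq 2\kappa |Y|^2$ by the operator-norm bound, and $2h^2 \langle Y, g\rangle \leq |Y|^2 + h^4 |g|^2$ by the weighted AM-GM (equivalently Cauchy-Schwarz with a suitable split). Combining these gives the scalar differential inequality
$$\frac{d}{ds}|Y(s)|^2 \leq (2\kappa + 1)|Y(s)|^2 + h^4 |g(X(s), s)|^2.$$

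The final step is to apply the integrating factor $e^{-(2\kappa+1)s}$ to get
$$\frac{d}{ds}\bigl[e^{-(2\kappa+1)s}|Y(s)|^2\bigr] \leq h^4 e^{-(2\kappa+1)s} |g(X(s), s)|^2,$$
integrate from $0$ to $s$ using $Y(0)=0$, and rearrange to obtain the stated bound on $|Y(s)|^2$.

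There is no real obstacle here; the argument is a textbook one. The only item requiring care is the linearization of $f(X) - f(\bar X)$, where one must use $C^1$-regularity (implicit in the existence and boundedness of $f'$) together with the fact that the integral form of the remainder yields a matrix whose norm is uniformly controlled by $\kappa$ along the straight-line segment between $\bar X(s)$ and $X(s)$.
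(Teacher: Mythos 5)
Your proof is correct and follows essentially the same route as the paper's: set $V=X-\bar X$, linearize $f(X)-f(\bar X)$ via the fundamental theorem of calculus to get the bound $\kappa|V|$, apply the weighted Cauchy--Schwarz step $2h^2 V\cdot g\leq |V|^2+h^4|g|^2$ to obtain $\frac{d}{ds}|V|^2\leq(2\kappa+1)|V|^2+h^4|g|^2$, and finish with the integrating factor. No gaps; the only caveat (shared with the paper, whose stated conclusion appears to carry a sign typo) is that integrating this inequality actually yields the factor $e^{+(2\kappa+1)(s-s')}$ rather than $e^{-(2\kappa+1)(s-s')}$ inside the integral.
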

\begin{proof}
Let $V(s) = X(s)-\bar X(s)$.  Then ($| \bullet |$ is the usual square sum norm on $\mathbb{R}^d$)
\begin{equation}
\label{E:ODE1}
\frac{d}{ds} |V|^2 = 2V\dot V = 2V\cdot (f(X)-f(\bar X)) + 2h^2 V\cdot g(X,s)
\end{equation}
We have
$$f(X) - f(\bar X) = \int_{\sigma=0}^1 \frac{d}{d\sigma}[ f(\bar X+ \sigma V)] \, d\sigma = \left( \int_{\sigma=0}^1 f'(\bar X+ \sigma V) \, d\sigma \right)V$$
Then by Cauchy-Schwarz,
$$| f(X) - f(\bar X)| \leq \kappa |V|$$
Substituting this into \eqref{E:ODE1}, and using that $2h^2 V \cdot g(X,s) \leq |V|^2 + h^4|g(X,s)|^2$, we obtain
$$\frac{d}{ds} |V|^2 \leq (2\kappa+1) |V|^2 + h^4|g(X,s)|^2$$
The standard integrating factor method completes the proof. 
\end{proof}
In our application,
$$X = \begin{bmatrix} A \\ C \end{bmatrix} \,, \qquad f(X) = \begin{bmatrix}
C-W(A) \\ CW'(A) \end{bmatrix}$$
Then
$$f'(X) = \begin{bmatrix} -W'(A)  & 1 \\ CW''(A) & W'(A) \end{bmatrix}$$
Since $\frac12\leq C \leq 2$, this is uniformly bounded.

\end{document}